\documentclass[11pt,a4paper]{article}
\date{}
\usepackage{caption2}
\usepackage{CJK}
\usepackage{tabls}
\usepackage{bm}
\usepackage{multirow}
\usepackage{amssymb}
\usepackage{amsfonts}
\usepackage{mathrsfs}
\usepackage{empheq}
\usepackage{amsmath}
\usepackage{amsthm}
\usepackage{graphicx}
\usepackage{color}
\usepackage{indentfirst}
\usepackage{hyperref}
\usepackage{float}
\usepackage{cases}
\usepackage{setspace}
\allowdisplaybreaks%

 \numberwithin{equation}{section}
\theoremstyle{definition}

\usepackage{graphicx}%
\usepackage{multirow}%
\usepackage{amsmath,amssymb,amsfonts}%
\usepackage{amsthm}%
\usepackage{mathrsfs}%
\usepackage[title]{appendix}%
\usepackage{xcolor}%
\usepackage{textcomp}%
\usepackage{manyfoot}%
\usepackage{booktabs}%
\usepackage{algorithm}%
\usepackage{algorithmicx}%
\usepackage{algpseudocode}%
\usepackage{listings}%
\usepackage{braket}
\usepackage{appendix, authblk}

\theoremstyle{definition}
\newtheorem{Def}{Deffinition}[section]
\newtheorem{Prop}[Def]{Proposition}
\newtheorem{lemma}[Def]{Lemma}
\newtheorem{theorem}[Def]{Theorem}
\newtheorem{remark}[Def]{Remark}

\newcommand{\N}{\mathbb{N}} 
\newcommand{\R}{\mathbb{R}} 

\newcommand{\al}{\alpha}
\newcommand{\bt}{\beta}
\newcommand{\bR}{\bar{R}}
\newcommand{\bS}{\bar{S}}

\makeatletter

\@addtoreset{equation}{section}
\newcommand{\bV}{\bar{V}}
\newcommand{\bW}{\bar{W}}
\newcommand{\wa}{w_{\alpha}}
\newcommand{\wz}{w_{\zeta}}
\newcommand{\wb}{w_{\beta} }
\newcommand{\wg}{w_{\gamma} }

\newcommand{\og}{\Omega_T}
 \newcommand{\bog}{\overline{\Omega_T}}
 \newcommand{\wze}{w_{\zeta,\varepsilon} }
 \newcommand{\wbe}{w_{\beta,\varepsilon} }

\begin{document}

\title{Loss of regularity  for solutions to  1D degenerate quasilinear wave equations}


\author[1]{Yanbo Hu%
}
\author[2,*]{Yuusuke Sugiyama%
}

\affil[1]{Department of Mathematics, Zhejiang University of Science and Technology, Hangzhou 310023, Zhejiang, PR China\\
\texttt{yanbo.hu@hotmail.com}}
\affil[2]{Tokyo University of Science, 1-3 Kagurazaka, Shinjuku-ku, Tokyo 162-8601, Japan\\
\texttt{sugiyama.y@rs.tus.ac.jp}}

\affil[*]{Corresponding author: \texttt{sugiyama.y@rs.tus.ac.jp}}

\maketitle
\begin{abstract}
In this paper, we study the loss of regularity for solutions to
one-dimensional degenerate wave equations. We first consider the linear
equation
\(
u_{tt}=\bigl(c(t,x)^2u_x\bigr)_x
\)
with \(c(t,x)\sim x^p\), and then the quasilinear equation
\(
u_{tt}=(u^{2a}u_x)_x.
\)
The initial data are assumed to satisfy
\(
u_0(x)\sim x^\alpha
\)
and
\(
u_1(x)\sim x^\beta
\)
near the degenerate point \(x=0\). In the linear problem, \(p\)
describes the strength of the degeneracy, while \(\alpha\) and
\(\beta\) describe the regularity of the initial data. In the
quasilinear problem, \(\alpha\) also determines the initial degeneracy
through \(u_0(x)^a\sim x^{a\alpha}\).

Our main concern is the actual occurrence of loss of regularity, namely,
the phenomenon in which the regularity of \(u(t,\cdot)\) near \(x=0\)
becomes lower than that of \(u_0\) for \(t>0\). Previously, such a loss
had mainly been established for special linear equations with
coefficients depending only on time. In our previous work on the
quasilinear equation, we proved local well-posedness without loss of
regularity when \(\beta\geq\alpha\). In the present paper, we show that
this condition is also necessary. More precisely, if
\(\beta<\alpha\), then
\(
C_1tx^\beta\leq u(t,x)\leq C_2x^\beta
\)
near \(x=0\) for sufficiently small positive time. Thus the order of
the solution changes from that of \(u_0\) to that of \(u_1\), and an
actual loss of regularity occurs for both linear equations with
time-space dependent coefficients and degenerate quasilinear equations.
\end{abstract}

\noindent\textbf{Keywords:}
degenerate wave equation; loss of regularity;
quasilinear wave equation; weak hyperbolicity

\medskip

\noindent\textbf{2020 Mathematics Subject Classification:}
35L05, 35L80, 35L53, 35B65.

\tableofcontents

\footnote[0]{MSC: 35L05, 35L80, 35L53, 35B65}

\section{Introduction}
\subsection{Degeneracy and  loss of regularity }
In this paper, we study the loss of regularity for one-dimensional
degenerate wave equations. We first consider the following linear
degenerate wave equation with a variable coefficient:
\begin{align} \label{vdwv}
   u_{tt} = \left(c(t,x)^2 u_x\right)_x .
\end{align}
Here the coefficient \(c(t,x)\) is allowed to vanish at the degenerate
point \(x=0\). This linear equation is used to clarify the basic
mechanism of the loss of regularity.

We then consider the following one-dimensional degenerate quasilinear
wave equation:
\begin{align} \label{eq}
   u_{tt} =  ( u^{2a}  u_x )_x
\end{align}
with initial conditions
\begin{align} \label{ini}
  u(0,x) = u_0(x) \quad \mbox{and} \quad u_t(0,x) = u_1(x).
\end{align}
The equation \eqref{eq} describes the shearing motion in elastoplastic
materials (see \cite{NC}). If \(u_0\geq c_0>0\) for some positive
constant \(c_0\), then the equation is non-degenerate near \(t=0\), and
\eqref{eq} can be solved locally in time with a suitable boundary
condition, such as a non-zero Dirichlet, Neumann, or periodic boundary
condition, or on the whole space \(\R\).

In the present paper, we consider the case where the equation
degenerates. In the linear equation \eqref{vdwv}, this corresponds to
the vanishing of \(c(t,x)\) at \(x=0\), while in the quasilinear equation
\eqref{eq}, it corresponds to the vanishing of \(u\) at the degenerate
point. From a physical point of view, the degeneracy describes an abrupt
change in the properties of a material, such as yielding. Despite this
physical motivation, the existence and regularity theory for such
degenerate equations is not complete, mainly because the principal part
of the equation degenerates.

In particular, we deal with the loss of regularity as a phenomenon
characteristic of degenerate equations. Our aim is to investigate
conditions under which such a loss of regularity actually occurs;
combined with our previous work \cite{YHYS1}, this gives a necessary
and sufficient characterization.

We illustrate the notion of regularity loss through a simple example of a linear wave equation with variable coefficients:
\begin{align}
  u_{tt} - a(t,x)^2 u_{xx} = 0, \quad t>0,\ x \in \mathbb{R},
\end{align}
with smooth initial data
\begin{align}
  u(0,x) = u_0(x), \quad u_t(0,x) = u_1(x).
\end{align}
If there exists \(c_0>0\) such that \(a(t,x)\geq c_0\), and if
\(a(t,x)\) is smooth, then the solution  preserves the classical and Sobolev regularities of the initial data for $m \in \N$ and $s \in \R$ :
\[
(u_0,u_1) \in C^m \times C^{m-1}
\quad\Longrightarrow\quad
(u(t,\cdot),u_t(t,\cdot) ) \in C^m \times C^{m-1}
\]
and
\[
(u_0,u_1) \in H^{s} \times H^{s-1}
\quad\Longrightarrow\quad
(u(t,\cdot),u_t(t,\cdot) )  \in H^{s} \times H^{s-1}.
\]

Namely, no regularity loss occurs.
However, for  variable coefficients $a(t,x)$ that degenerate at some point, the characteristic speed vanishes at the degenerate point.  
In this case, the following estimate is known (e.g. Oleinik \cite{ol}, Han \cite{qh}, Han, Hong and Lin \cite{HanHongLin2006} and  Ascanelli and  Cicognani \cite{AscanelliCicognani2005}):
\begin{align}
  \|u(t,\cdot)\|_{H^{s}} \le C \bigl( \|u_0\|_{H^{s+\sigma}} + \|u_1\|_{H^{s-1+\sigma}} \bigr),
\end{align}
where $\sigma>0$ depends on the order of degeneracy.
Based on such an estimate, the phenomenon in which the regularity order of the solution is lower than that of the initial data is called regularity loss.
However, even for linear degenerate equations, the occurrence of such phenomena is known only in limited and rather exceptional situations (for instance, when $a(t,x)=t^l$ for some $l>0$). 
In particular, in the case where the propagation speed depends on the spatial variable $x$, very little has been known about the actual occurrence of regularity loss.
Naturally, for quasilinear problems, no results have been available.

In our previous work \cite{YHYS1}, we studied solutions whose initial
profiles behave like
\[
  u_0(x)\sim x^\alpha,\qquad u_1(x)\sim x^\beta
\]
near the degenerate point \(x=0\). We proved that, if
\(\beta\geq\alpha\), then the corresponding solution preserves the
order of \(u_0\) near \(x=0\) for a short time. More precisely, there
exist positive constants \(C_1\) and \(C_2\) such that
\[
  C_1 x^\alpha \leq u(t,x)\leq C_2 x^\alpha
\]
near \(x=0\). In this sense, no loss of regularity occurs in this
regime.

We recall that the exponent \(\alpha\) has two roles. It represents the
strength of the degeneracy of the equation, and at the same time it
determines the regularity of the function near the degenerate point.
For example, a function behaving like \(x^\alpha\) near \(x=0\) has
regularity determined by the exponent \(\alpha\). Thus the condition
\(\beta\geq\alpha\) means that \(u_1\) is no less regular than \(u_0\)
near the degenerate point.

The purpose of the present paper is to prove the converse phenomenon.
We show that the condition \(\beta\geq\alpha\) is sharp. More precisely,
if \(\beta<\alpha\), then every solution in the natural class loses the
regularity determined by \(u_0\). In fact, near \(x=0\), the leading
order of the solution changes from the order of \(u_0\) to that of
\(u_1\); for sufficiently small positive time, the solution satisfies
\[
  C_1 t x^\beta\leq u(t,x)\leq C_2 x^\beta .
\]

\subsection{Main theorems}

 Let \(\delta>0\) be fixed. We consider the linear equation \eqref{vdwv}
on the interval \(x\in[0,\delta]\). The coefficient
\(c\in C^1([0,T]\times[0,\delta])\) is assumed to satisfy, for
\((t,x)\in[0,T]\times[0,\delta]\),
\begin{align}
L_0x^p\leq c(t,x)\leq L_1x^p,\qquad
L_2x^{p-1}\leq c_x(t,x)\leq L_3x^{p-1},
\label{c-assumption}
\end{align}
and
\begin{align}
|c_t(t,x)|\leq L_4x^p,
\label{ct-assumption}
\end{align}
where \(L_0,\ldots,L_4\) are positive constants. Furthermore, the
initial data \((u_0,u_1)\) satisfy, for \(x\in[0,\delta]\),
\begin{align}\label{inid}
K_0x^\alpha\leq u_0(x)\leq K_1x^\alpha,\qquad
K_2x^\beta\leq u_1(x)\leq K_3x^\beta,
\end{align}
where \(K_0,\ldots,K_3\) are positive constants.

We establish the following result for this equation. The theorem shows
that, if \(\beta<\alpha\), then the leading order of the solution near
the degenerate point \(x=0\) is governed by \(u_1\), and a loss of
regularity occurs.

\begin{theorem}\label{thl}
Let \(p>1\) and \(\alpha>\beta>0\). Any solution to
\eqref{vdwv} with initial data \eqref{inid} that satisfies
\begin{align}
u\in C([0,T]\times[0,\delta])\cap C^2([0,T]\times(0,\delta]),
\label{27}\\
u_t,\ c(t,x)u_x\in C([0,T]\times[0,\delta])
\label{28}
\end{align}
also satisfies the following estimate, provided that \(T>0\) is
sufficiently small:
\[
C_1tx^\beta\leq u(t,x)\leq C_2x^\beta,
\]
for \((t,x)\in[0,T]\times[0,\delta_1]\), where
\(\delta_1<\delta\) is sufficiently small.
\end{theorem}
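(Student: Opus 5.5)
We will use the characteristic (Riemann invariant) formulation that is natural for one space dimension and which the regularity class \eqref{27}--\eqref{28} is tailored to. Set
\[
R=u_t+c\,u_x,\qquad S=u_t-c\,u_x .
\]
From \eqref{vdwv} we get
\begin{align*}
\partial_+ R&:=R_t-cR_x=\tfrac{c_x}{2}(R-S)+\tfrac{c_t}{2c}(R-S),\\
\partial_- S&:=S_t+cS_x=-\tfrac{c_x}{2}(R-S)-\tfrac{c_t}{2c}(R-S).
\end{align*}
By \eqref{ct-assumption} the quotient $c_t/c$ is bounded. The characteristic curves $dx/dt=\pm c(t,x)$ satisfy $|dx/dt|\le L_1x^p$. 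Since $p>1$, a characteristic issued from $x_0$ stays comparable to $x_0$ on $[0,T]$ for small $T$, uniformly in $x_0\in(0,\delta]$. Integrating $dx/dt\le L_1x^p$ gives $x(t)^{1-p}\ge x_0^{1-p}-(p-1)L_1t$, and this is harmless precisely because $x_0^{1-p}$ is large. So $x(t)\in[\tfrac12x_0,2x_0]$, and no characteristic reaches $x=0$. The degenerate point is therefore never influenced by the interior. We will use this comparability throughout.

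\textbf{Step 1: a priori bounds on $u_t$.} The initial values are $R(0,x)=u_1+c u_0'$ and $S(0,x)=u_1-cu_0'$. The term $c u_0'$ is formally of order $x^{p+\alpha-1}$. This is not directly available since only \eqref{inid} is assumed for $u_0$, not for $u_0'$. To avoid needing $u_0'$, we will instead work with the quantity $w=u_t$, which solves the same equation \eqref{vdwv} up to lower-order terms involving $c_t$. Its data are $w(0)=u_1\sim x^\beta$ and $w_t(0)=(c^2u_0')'$. The alternative, and the approach I commit to, is the following. Estimate $R,S$ in weighted sup norms $\sup x^{-\beta}|R|$ and $\sup x^{-\beta}|S|$ along characteristics, using the continuity of $c u_x$ up to $x=0$ from \eqref{28}. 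The coefficient $c_x\sim x^{p-1}$ is integrable along characteristics in time and small for $x\le\delta_1$ and small $T$. A Gronwall argument along the two families then gives
\[
|R-u_1|+|S-u_1|\le \varepsilon(T,\delta_1)\,x^\beta+\text{(contribution of }cu_0')\quad\text{on }[0,T]\times[0,\delta_1].
\]
Since $u_t=(R+S)/2$, this yields $\tfrac{K_2}{2}x^\beta\le u_t\le 2K_3x^\beta$.

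\textbf{Step 2: integrate in time.} We have $u(t,x)=u_0(x)+\int_0^tu_t$. Then $u\ge K_0x^\alpha+\tfrac{K_2}{2}tx^\beta\ge C_1tx^\beta$. Also $u\le K_1x^\alpha+2K_3Tx^\beta\le C_2x^\beta$, using $\alpha>\beta$ and $x\le\delta_1\le1$.

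\textbf{Main obstacle.} The hard part is controlling $c u_x$ initially, i.e.\ the term $cu_0'$, with only the two-sided bound \eqref{inid} on $u_0$ and no derivative information. The remedy I would try first is to write $cu_x$ via the equation as a spatial primitive. Integrating \eqref{vdwv} in $x$ from $0$ gives
\[
c^2u_x(t,x)=\int_0^x u_{tt}\,dy+(c^2u_x)(t,0),
\]
and the boundary term vanishes because $cu_x$ is continuous and $c(t,0)=0$. In time-integrated form, $\int_0^x(u_t(t)-u_1)\,dy=\int_0^t c^2u_x\,ds$. Combining this identity with $u_0\le K_1x^\alpha$ shows that the averaged contribution of $u_0$ is of order $x^{\alpha}\cdot x^{p-1}\ll x^\beta$. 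This gives Step 1 with $\varepsilon$ small, which closes a continuity (bootstrap) argument in $T$.
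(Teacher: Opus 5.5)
\textbf{Gap in Step 1.} The pointwise pinching $\frac{K_2}{2}x^\beta\le u_t\le 2K_3x^\beta$ does not follow from the hypotheses, and in general it is false. Nothing in \eqref{inid}, \eqref{27}, \eqref{28} controls $cu_0'$ beyond continuity with value $0$ at $x=0$.

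For instance, take $c=x^p$, $u_1=x^\beta$, and $u_0=x^\alpha(2+\sin x^{-N})$ with $N=p+\alpha-1-\beta/2>0$. Then $cu_0'$ contains the term $-Nx^{\beta/2}\cos(x^{-N})$. So $x^{-\beta}|R(0,x)|$ is unbounded, and your weighted Gronwall argument cannot start. Moreover, $R$ and $S$ are carried from the two different feet $x_-(0)>x>x_+(0)$, which gives
\[
u_t(t,x)=\tfrac12\bigl[(cu_0')(x_-(0))-(cu_0')(x_+(0))\bigr]+o(x^{\beta/2}).
\]
The phases $x_\mp(0)^{-N}$ differ by roughly $2Ntx^{\beta/2-\alpha}\to\infty$. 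Hence, for every fixed $t>0$, $u_t(t,\cdot)$ oscillates with amplitude comparable to $x^{\beta/2}\gg x^\beta$ and changes sign near $x=0$, even though $u\ge Ctx^\beta$ still holds.

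The remedy you sketch does not fix this. The identity $\int_0^x(u_t(t)-u_1)\,dy=\int_0^tc^2u_x\,ds$ only gives information averaged in $x$. Converting it back into pointwise bounds on $u_t$ brings the $cu_0'$ terms right back. Your scheme would essentially go through under an extra hypothesis such as $|u_0'|\le Cx^{\alpha-1}$, since then $cu_0'=O(x^{p+\alpha-1})$ and $p+\alpha-1>\beta$. That, however, is a weaker theorem.

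\textbf{What is missing.} You must never differentiate $u_0$, and you should estimate $u$ itself rather than $u_t$. Your primitive identity belongs inside the invariants. The quantities $w,z=\int_0^xu_t\,d\xi\pm cu$ satisfy $w_t-cw_x=(c_t-cc_x)u$ and $z_t+cz_x=-(c_t+cc_x)u$. Subtracting their integrated forms along characteristics expresses $2c(t,x)u(t,x)$ as the sum of three pieces:
\begin{itemize}
\item $\int_{x_+(0)}^{x_-(0)}u_1\,dy$, which is comparable to $tx^{p+\beta}$;
\item the values of $cu_0$ at the two feet, which are nonnegative and $O(x^{p+\alpha})$ (this is where the dangerous $cu_0'$ terms end up after integration);
\item a zeroth-order remainder $\int_0^t\{V_-u(s,x_-(s))+V_+u(s,x_+(s))\}\,ds$ with $|V_\pm|\le Cx^p$.
\end{itemize}
This step is justified by the continuity of $u_t$ and $cu_x$ up to $x=0$.

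One then closes the upper bound $|u|\le C(x^\alpha+tx^\beta)$ by a weighted sup-norm argument with weight $((x+\varepsilon)^\alpha+t(x+\varepsilon)^\beta)^{-1}$, letting $\varepsilon\to0$. Feeding this back into the remainder makes it $O(tx^{p+\alpha}+t^2x^{p+\beta})$, which is absorbed by the $u_1$-term for small $\delta_1$ and $T$. Dividing by $c\le L_1x^p$ then gives $u\ge Ctx^\beta$.

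A minor point: the $c_x$-term in your equation for $\partial_-S$ should read $+\tfrac{c_x}{2}(R-S)$.
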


The following result can be obtained by applying the argument used to establish the loss of regularity for the linear equation to the quasilinear case.
\begin{theorem} \label{thq}
Let $a \geq1$ and $\alpha > \bt > 0$. 
Any solution of \eqref{eq} with  initial data \eqref{inid}, satisfying the following conditions:
\begin{align}
u \in C^2([0,T] \times(0,\delta]) \cap C([0,T] \times [0,\delta]), \label{29}  \\
u_t, u^{a-1} u_x \in  C([0,T] \times [0,\delta]),\label{29-2} \\
u(t,0)=u_t (t,0) =0, \label{30} \\
u(t,x) \geq 0, \label{31}
\end{align}
also satisfies the following estimate, provided that \(T>0\) is sufficiently small:
\[
C_1 t x^{\beta} \leq u(t,x) \leq C_2 x^{\beta},
\]
for \((t,x)\in[0,T]\times[0,\delta_1]\), where
\(\delta_1<\delta\) is sufficiently small.
\end{theorem}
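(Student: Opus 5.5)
We plan to reduce Theorem \ref{thq} to the mechanism behind Theorem \ref{thl}. The idea is to freeze the coefficient: set \(c(t,x):=u(t,x)^a\), so that \(u\) solves the linear equation \(u_{tt}=(c^2u_x)_x\). The difference from the linear case is that the degeneracy rate of \(c\) is not prescribed a priori. A bootstrap is therefore needed, showing that \(c\sim x^{a\beta}\) on a short time interval. Since \(a\ge1\) and \(\beta>0\), the effective exponent \(p=a\beta\) need not exceed \(1\). So rather than quote Theorem \ref{thl}, we rerun its characteristic argument with \(p=a\beta\).

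\textbf{Characteristic estimates.} We work with the Riemann-type variables \(R=u_t+cu_x\) and \(S=u_t-cu_x\), which are continuous up to \(x=0\) by \eqref{29-2} (\(cu_x=u\cdot u^{a-1}u_x\)). They satisfy
\[
\partial_\pm R,S=\text{(lower order terms)},\qquad \partial_\pm=\partial_t\pm c\,\partial_x,
\]
where the lower-order terms are proportional to \(c_x(R-S)\) and \(c_t\), and \(c_t=a u^{a-1}u_t\). Near \(x=0\) the characteristic curves \(dx/dt=\pm c\) issued from \(\{t=0\}\) travel a distance of order \(\int c\,dt\lesssim T x^{a\beta}\ll x\) in time \(T\). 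Consequently a point \((t,x)\) is reached by characteristics starting at points \(y\) with \(y\sim x\), and \eqref{30} together with \(c(t,0)=0\) ensures the boundary \(x=0\) is itself a characteristic, so no boundary data enter.

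\textbf{Bootstrap.} We make the ansatz
\[
C_1' t x^\beta\le u\le C_2'x^\beta,\qquad |u_t|\le C_3 x^\beta,\qquad |cu_x|\le C_4 x^{(a+1)\beta-1}\cdot(\text{small}) .
\]
Integrating along characteristics over \([0,T]\), the source terms are bounded by \(T\) times the ansatz bounds. We take the bound on \(|cu_x|\) from the initial quantity \(u_0^a u_0'\sim x^{(a+1)\alpha-1}\); since \(\alpha>\beta\), this is \(o(x^\beta)\) near \(0\) provided \((a+1)\alpha-1\ge\beta\), and otherwise we absorb it by shrinking \(\delta_1\). The upshot is that \(u_t(t,x)=u_1(x)+O(T)x^\beta\) and \(cu_x=o(x^\beta)\).

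\textbf{Closing the estimate.} Next we write
\[
u(t,x)=u_0(x)+\int_0^t u_t\,ds .
\]
Then \(u\ge tK_2x^\beta/2\ge C_1tx^\beta\), because \(u_0\ge0\) and \(u_t\ge K_2x^\beta-O(T)x^\beta\). Similarly \(u\le K_1x^\alpha+2K_3Tx^\beta\le C_2x^\beta\), using \(x^\alpha\le x^\beta\) for \(x\le1\). This recovers the ansatz with improved constants for \(T\) small. Finally, a continuity argument in \(T\) closes the bootstrap, using \eqref{29}--\eqref{29-2} and the nonnegativity \eqref{31}.

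\textbf{Main obstacle.} The hardest step is controlling the term \(c_x(R-S)\) along characteristics. Here \(c_x=au^{a-1}u_x\) has no a priori sign or size beyond the continuity in \eqref{29-2}. At \(t=0\) it behaves like \(x^{a\alpha-1}\), which may be singular when \(a\alpha<1\); however, \(a\ge1\) and continuity of \(u^{a-1}u_x\) give boundedness. Our first attempt will be to estimate \(\int |c_x|\,|R-S|\,dt\) using \(|R-S|=2|cu_x|\) and the boundedness of \(u^{a-1}u_x\), which yields a factor \(T\). If that is not enough, we will instead use a Grönwall inequality weighted by \(x^{-\beta}\) along characteristics.
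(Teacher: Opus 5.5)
\textbf{Verdict: genuine gap, but repairable.} Your route differs from the paper's and can be made to work. As written, though, the bootstrap that carries the argument does not close.

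For comparison, the paper never differentiates the Riemann invariants. It uses $w,z$ from \eqref{wz}, which are exactly constant along characteristics, and obtains the d'Alembert identity for $u^{a+1}$. It proves $u\le Cx^\beta$ by a sup with the regularized weight $(x+\varepsilon)^{-\beta}$: this sup is finite for each $\varepsilon$ by continuity, and becomes uniform in $\varepsilon$ after absorbing an $O(T)$ term. It gets $u\ge Ctx^\beta$ by iterating $u^{a+1}\ge Cx^\beta\int_0^t u^a(s,x_+(s))\,ds$, starting from $u\ge Cx^\alpha$.

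In your version, three things fail.
\begin{enumerate}
\item The ansatz $|cu_x|\le C_4x^{(a+1)\beta-1}\cdot(\text{small})$ rests on $u_0^au_0'\sim x^{(a+1)\alpha-1}$, which is not a hypothesis. \eqref{inid} says nothing about $u_0'$ or $u_1'$, and \eqref{29-2} only gives $|cu_x|=u\,|u^{a-1}u_x|\le Cu$. Also, ``otherwise absorb it by shrinking $\delta_1$'' is backwards: a lower power of $x$ cannot be absorbed into $x^\beta$ by making $x$ small.
\item The continuity argument in $T$ has nothing to run on. $\sup_x x^{-\beta}|u_t(t,x)|$ is not known to be finite for $t>0$, let alone continuous in $t$, because the weight is singular at $x=0$ and $u_t$ is merely continuous there. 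This is exactly the difficulty the paper's $\varepsilon$-regularization avoids. Your weighted Gr\"onwall fallback has the same problem unless you regularize the weight.
\item The claim $\int c\,dt\lesssim Tx^{a\beta}\ll x$ fails as $x\to0$ when $a\beta<1$, a case you explicitly allow. The correct input is $u^a\le Mx$, which follows from boundedness of $\partial_x(u^a)=au^{a-1}u_x$ and $u(t,0)=0$; this is Lemma \ref{es-cq}.
\end{enumerate}
A smaller point: transport gives $u_1$ at the feet $x_\pm(0)$, not $u_1(x)$. That is harmless by \eqref{inid} and $x_\pm(0)\sim x$.

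\textbf{The missing observation.} The ``lower-order terms'' can be computed exactly. Since $c_tu_x=au^{a-1}u_tu_x=c_xu_t$, one has
\[
(\partial_t-c\partial_x)R=c_xR,\qquad (\partial_t+c\partial_x)S=-c_xS .
\]
This is \eqref{wz-eq} differentiated in $x$; indeed $R=w_x$ and $S=z_x$.

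Let $M=\|c_x\|_{L^\infty}$, which is finite by \eqref{29-2}. Then $R(t,x)=R(0,x_-(0))\exp\int_0^t c_x(s,x_-(s))\,ds$, and $S$ has the analogous formula along $x_+$. There is no coupling, so no bootstrap is needed.

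Next, $|u_0^au_0'|\le\|u^{a-1}u_x\|_{L^\infty}u_0\le Cx^\alpha=o(x^\beta)$ and $x_\pm(0)\in[e^{-MT}x,e^{MT}x]$. Together these give $c_1x^\beta\le u_t\le c_2x^\beta$ on $[0,T]\times(0,\delta_1]$. Then $u=u_0+\int_0^t u_t\,ds$ yields both bounds at once.

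Repaired this way, your derivative-level argument is shorter than the paper's: it needs no weighted absorption and no iteration, and it also gives $u_t\sim x^\beta$. The paper's zeroth-order identity, by contrast, involves no derivatives of the data and no source term, at the cost of the weighted absorption and the iteration.
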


\begin{remark}\label{re1}
  It is not always necessary to assume the nonnegativity of $u(t,x)$. For instance, if $a$ is a natural number and, in addition, $u^m u_{xx}$ is assumed to be bounded for some $m \in \N$ such that $m <2a$, then for sufficiently small $T$ one can deduce the non-negativity of $u(t,x)$.

\end{remark}
In the proof of Theorem \ref{thq}, the divergence-free structure of the equation is not utilized.  
In fact, a similar theorem also holds for the following variational nonlinear wave equation:
\begin{align}\label{veq}
u_{tt} = u^a (u^a u_x)_x.
\end{align}
\begin{theorem} \label{thq2}
Under the same assumptions as in Theorem~\ref{thq}, the solution to the initial value problem \eqref{veq}, \eqref{ini} satisfies the same conclusion as in Theorem~\ref{thq}.
\end{theorem}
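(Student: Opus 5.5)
Theorem~\ref{thq2} follows the proof of Theorem~\ref{thq}, which in turn rests on the characteristic-method argument for Theorem~\ref{thl}. The plan is to write \eqref{veq} in Riemann-invariant form. With $c=u^a$, set
\[
R=u_t+c\,u_x,\qquad S=u_t-c\,u_x .
\]
For \eqref{veq}, $u_{tt}=c(cu_x)_x$, so a direct computation gives
\[
\partial_+ S:=(\partial_t+c\partial_x)S=c_t u_x=\tfrac{a}{2}u^{a-1}u_x(R+S),\qquad
\partial_- R:=(\partial_t-c\partial_x)R=-c_t u_x=-\tfrac{a}{2}u^{a-1}u_x(R+S).
\]
The divergence form \eqref{eq} gives the same principal structure but extra terms involving $c_x$. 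Since the proof of Theorem~\ref{thq} does not use the divergence structure, the variational form \eqref{veq} is in fact slightly simpler.

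The steps are these. First, by \eqref{29}--\eqref{31} the quantities $u_t$ and $u^{a-1}u_x$ are continuous up to $x=0$. Hence $R,S$ are bounded on $[0,T]\times[0,\delta]$, and the coefficient $u^{a-1}u_x$ is bounded there, as in \eqref{29-2}. Second, a bootstrap: as long as $u$ is comparable to $x^\beta$ (the upper bound $u\le C_2x^\beta$ from the a priori step), we have $c=u^a\lesssim x^{a\beta}$. Third, integrating along characteristics $\dot x=\pm c$ from $t=0$ gives $R,S=u_1+O(t)\cdot(R+S)$-type Gronwall bounds. Because $a\beta$ plays the role of $p$ and $a\ge1$, characteristics through small $x$ stay comparable to $x$: $x(s)\sim x$. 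This is justified via $\int c\,dt\lesssim T x^{a\beta}\ll x$ when $a\beta\ge1$, or via the ODE $\dot x\lesssim x^{a\beta}$ otherwise, which over short times still keeps $x(s)\sim x$ if we start at $x>0$.

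We then obtain $u_t=(R+S)/2$ with
\[
K_2x^\beta-Ct\,x^\beta\le u_t\le K_3x^\beta+Ctx^\beta .
\]
Here the $x^\beta$-weighted smallness of the source comes from writing the source as $c_tu_x$ with $c_t=au^{a-1}u_t$, so that the equations close for $u_t$ weighted by $x^{-\beta}$. Integrating in $t$ and using $u_0\sim x^\alpha\le x^\beta$ on small $x$ (since $\alpha>\beta$), we obtain
\[
u\ge u_0+\tfrac{K_2}{2}tx^\beta\ge C_1tx^\beta,\qquad u\le K_1x^\alpha+CTx^\beta\le C_2x^\beta,
\]
which closes the bootstrap for small $T,\delta_1$.

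The main obstacle is the weighted estimate on $u_t$, namely showing $|u_t|\lesssim x^\beta$ along characteristics. The difficulty is that $u^{a-1}u_x$ is only known to be bounded, while $cu_x$ must be shown to be $O(x^\beta)$. I would handle this by deriving the transport system for $x^{-\beta}R$ and $x^{-\beta}S$. The commutator term $\beta x^{-\beta-1}c\,R$ is controlled because $c/x\lesssim x^{a\beta-1}$ is bounded when $a\beta\ge1$. When $a\beta<1$, I would instead replace $x$ by the characteristic variable $\int dx/c$, exactly as done for Theorem~\ref{thq}, whose argument we reuse verbatim. Nonnegativity \eqref{31} and $u(t,0)=u_t(t,0)=0$ are used, as there, to fix the boundary values at $x=0$ and to make $u^a$ well defined.
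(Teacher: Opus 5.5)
Your route is genuinely different from the paper's. The paper keeps the integrated invariants $w,z=\int_0^x u_t\,d\xi\pm u^{a+1}/(a+1)$. For \eqref{veq} these acquire the nonlocal source $-a\int_0^x u^{2a-1}u_x^2\,d\xi$, which enters the d'Alembert formula \eqref{we-df2} as a nonnegative remainder. The upper bound then follows as in Proposition~\ref{33}. For the lower bound, the remainder is bounded by $CTx^\beta\int_0^t A(s)\,ds$, using $u^{2a-1}u_x^2\le Cu$ and $u\le Cx^\beta$. It is absorbed into the $u_1$-term, and the iteration of Proposition~\ref{34} is rerun starting from $u\ge Cx^\alpha$.

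You instead use the differentiated invariants $u_t\pm u^a u_x$, whose sources $\pm a(u^{a-1}u_x)u_t$ are local with a bounded coefficient. A weighted sup bound gives $u_t\ge cx^\beta$ near $x=0$, and integrating in $t$ gives $u\ge u_0+ctx^\beta$. There is no nonlocal remainder and no iteration, and you get the extra information $u_t\sim x^\beta$. Once repaired this is a valid and arguably more direct proof, but as written three points are wrong or missing.

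First, your initial values are $R(0,\cdot)=u_1+u_0^au_0'$ and $S(0,\cdot)=u_1-u_0^au_0'$, not $u_1$. The hypotheses control $u_0'$ only through $u_0^{a-1}u_0'$. The missing observation is $|u^au_x|=u\,|u^{a-1}u_x|\le Mu$, with $M=\|u^{a-1}u_x\|_{L^\infty([0,T]\times[0,\delta])}$. It makes $x^{-\beta}(|R|+|S|)$ bounded at $t=0$ and shows that the $u_0'$-terms are $O(x^\alpha)$. So the correct lower bound is $u_t\ge c_1x^\beta-Cx^\alpha-Ctx^\beta$, and the hypothesis $\alpha>\beta$ (with $\delta_1$ small) is needed here, not only in the upper bound. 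With this, what you call the main obstacle disappears: $cu_x=(R-S)/2$ is controlled by the same weighted bound as $u_t$.

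Second, the comparability $x_\pm(s;t,x)\sim x$ and the commutator bound need neither a bootstrap on $u\le Cx^\beta$ nor a case split on $a\beta$. Your argument for $a\beta<1$ is also incorrect. The bound $\dot x\lesssim x^{a\beta}$ with $a\beta<1$ only gives $x(s)^{1-a\beta}\le x^{1-a\beta}+Cs$, which does not keep $x(s)/x$ bounded as $x\downarrow0$. Moreover, no change of variables $\int dx/c$ is used for Theorem~\ref{thq}. The correct input is $u(t,0)=0$: the mean value theorem gives $u(t,y)^a\le aMy$. This is precisely Lemma~\ref{es-cq}, giving $e^{-MT}x\le x_\pm(s;t,x)\le e^{MT}x$, and it also gives $c/x\le aM$ for all $a,\beta$.

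Third, since $R,S$ are only known to be bounded, $\sup x^{-\beta}(|R|+|S|)$ is not a priori finite. As in the paper, use the weight $(x+\varepsilon)^{-\beta}$ on $[0,T]\times[0,\delta/2]$, with $T$ so small that backward characteristics stay in $(0,\delta]$; the portion lying in $[\delta/2,\delta]$ contributes $C_\delta T$. Close the estimate uniformly in $\varepsilon$, then let $\varepsilon\to0$. Finally, your signs are swapped: $(\partial_t-c\partial_x)R=c_tu_x$ and $(\partial_t+c\partial_x)S=-c_tu_x$, which is harmless.
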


\begin{remark}[Loss of differentiability]
The main estimates show that, for \(t>0\), the leading order of
\(u(t,x)\) near \(x=0\) is \(x^\beta\), rather than \(x^\alpha\).
Thus the regularity at the degenerate point is reduced from that
determined by \(u_0\) to that determined by \(u_1\).

This loss can be interpreted in the sense of Peano differentiability.
We recall that Peano derivatives can be characterized in terms of
higher order difference quotients; see Ash~\cite{Ash1970-Peano} and
Ash, Catoiu and Fejzi\'c~\cite{AshCatoiuFejzic2024-TwoPointwise}.
For a function whose leading behavior near \(x=0\) is \(x^\sigma\),
the differentiability at the degenerate point is determined by the
exponent \(\sigma\). Hence, if
\[
  \lfloor \beta \rfloor < \lfloor \alpha \rfloor,
\]
then the order of Peano differentiability at \(x=0\) drops after
positive time.

Moreover, if \(0<\beta<1\), then the loss is also a loss of classical
differentiability. Indeed, since \(u(t,0)=0\), the lower bound
\[
  u(t,x)\geq Ctx^\beta
\]
implies
\[
  \frac{u(t,x)-u(t,0)}{x}
  \geq Ct x^{\beta-1}\to\infty
  \qquad (x\downarrow0).
\]
Thus, in this case, the solution is not classically differentiable at
\(x=0\) for any fixed \(t>0\).
\end{remark}

\subsection{Known results}
The well-posedness of non-degenerate quasi-linear wave equations has been widely known.
Kato \cite{tk} and  Hughes, Kato and Marsden \cite{HKM} have shown an abstract theorem about the well-posedness
of the system of  general quasi-linear wave equations in the Sobolev space.
In the one-dimensional case, the well-posedness in $C^1 _b$ class for first order  hyperbolic equations has been studied  by Douglis \cite{D} and Hartman and Wintner \cite{HW2} (see also Majda \cite{m} and Courant and Lax \cite{CL}),
where $C^1 _b $ is a set of continuous and bounded functions whose derivatives are also bounded.
In order to apply these results to the existence problem for \eqref{eq}, the following assumption is required:
\begin{eqnarray} \label{non-deg}
 u_0 (x) \geq   c_0 >0
\end{eqnarray}
for a constant $c_0$. This condition ensures that the equation \eqref{eq} is of the strictly hyperbolic type near $t=0$. 
\eqref{eq} can be written formally as the following hyperbolic system:
\begin{align*}
\left\{
\begin{aligned}
v_t &= u^{2a} u_x,\\
u_t &= v_x ,
\end{aligned}
\right.
\end{align*}
where $v = \int u_t dx $.
That is, we set
\[
U =
\begin{pmatrix}
u \\
v
\end{pmatrix}, 
\quad
A(U) =
\begin{pmatrix}
0 & 1 \\
u^{2a} & 0
\end{pmatrix}.
\]
Then the system can be written as
\[
U_t = A(U)\, U_x.
\]
In other words, the non-degeneracy of the equation means that the eigenvalues of the matrix $A(U)$ are distinct.
In the following, we review previous studies on degenerate hyperbolic equations.
The existence, nonexistence and regularity of solutions to the following type of  linear weakly hyperbolic equations have been studied by many authors (e.g. Oleinik \cite{ol}, Colombini and  Spagnolo  \cite{CS}, Ivrii and Petkov \cite{IV} and Taniguchi and  Tozaki \cite{TT}),
\begin{eqnarray} \label{wl}
u_{tt} - \sum_{i,j=1} ^n  a_{i,j}(t,x)  \partial_{x_i} \partial_{x_j}  u + \sum_{j=1} ^n b_j (t,x)  \partial_{x_j}u=0,
\end{eqnarray}
where $a_{i,j}$ and $b_j$ are smooth functions and $\sum_{i,j=1} ^n  a_{i,j}(t,x) \xi_i \xi_j \geq 0$ is assumed for $(\xi_1 , \ldots , \xi_n) \in \mathbb{R}^n$. We note that  $\sum_{i,j=1} ^n a_{i,j}(t,x) \xi_i \xi_j =0$ corresponds to the degeneracy.
In Oleinik \cite{ol}, \eqref{wl} has been solved under the so-called Levi condition:
 \begin{eqnarray*} 
  C_1   \left(  \sum_{j=1}^n b_j \xi_j \right)^2 \leq C_2 \sum_{i,j=1} ^n \left( a_{i,j} \xi_i \xi_j + \partial_t a_{i,j} \xi_i \xi_j  \right) .
\end{eqnarray*}
Regardless of assuming the Levi condition, we can only obtain  the following energy estimate with the regularity loss for weakly hyperbolic equations:
\begin{eqnarray} \label{re-loss}
\| u \|_{H^{s}} + \| u_t \|_{H^{s-1}} \leq C(\| u_0 \|_{H^{s+r_1}} + \| u_1 \|_{H^{s-1 +r_2}}),
\end{eqnarray}
where $s$ is an arbitrary real number and $r_1$ and $r_2$ are  non-negative numbers. 
However, the loss of regularity is known only in a few examples.  
Qi \cite{Qi1958} has considered the problem:
\[
    v_{tt} - t^2 v_{xx} = b v_x, \quad 
    v(x,0) = \varphi(x), \quad 
    v_t(x,0) = 0, \quad x \in \mathbb{R}.
\]
If $b = 4m + 1$ with $m \in \mathbb{N}$, the solution has the form
\[
    v(x,t) = \sum_{j=0}^m C_j t^{2j} \partial_x^j \varphi \!\left(x + \tfrac{1}{2}t^2 \right),
\]
with constants $C_j$ and non-vanishing $C_m$.  
If $\varphi \in H^s$, then
\[
    v(\cdot,t) \in H^{s-m}, \quad (t>0).
\]
This example has been generalized by Taniguchi–Tozaki \cite{TT} and Yagdjian \cite{Yagdjian1997} to a broader class of variable-coefficient wave equations whose principal part depends on time.  
However, when the coefficients of the equation depend on the spatial variables, such results had not been known. Galstian and Kinoshita in \cite{GalstianKinoshita2016} provide representation formulas for solutions to the following one–dimensional linear degenerate wave equations:
\[
\partial_t^{2}u(t,x)\;-\;|x|^{2k}\,\partial_x^{2}u(t,x)\;=0,
\]
and
\[
\partial_t^{2}u(t,x)\;-\;t^{2\ell}x^{2k}\,\partial_x^{2}u(t,x)\;=\;0.
\]
Their formulas involve special functions, including the Gauss hypergeometric functions and Bessel functions. However, whether a loss of regularity occurs at \(x=0\) is not explicitly stated.
In Section 5, we use the explicit formula given in that paper 
to compute the actual loss of regularity for solutions of 
\(\partial_t^{2}v - x^{4}\partial_x^{2}v = 0\),
and we confirm that it agrees with our main result.

Colombini–Spagnolo \cite{CS} constructed a smooth oscillatory time-dependent coefficient $a(t)\ge0$ such that the Cauchy problem
\[
u_{tt}-a(t)u_{xx}=0
\]
is not well-posed in $C^\infty$.  
This explicit counterexample shows that even smooth weakly hyperbolic equations with oscillating coefficients may lose well-posedness, in contrast to the analytic data case where well-posedness is preserved.
When the principal part is non-degenerate, it is known that if the coefficients are not smooth 
(for example, merely Lipschitz or H{\"o}lder continuous), 
then an actual loss of regularity can occur, and the solvability may be adversely affected (see Cicognani and Colombini \cite{Cicognani2006} and \cite{ColombiniSpagnolo1989}).
From the viewpoint of regularity and solvability for nonlinear wave equations, the adverse effects of degeneracy on solutions are much fewer and more limited. First, we review results concerning the existence of solutions for nonlinear problems. Manfrin in \cite{rm0, rm1, rm2} and D'Ancona and Manfrin in \cite{DM} have established the local existence and the uniqueness for quasilinear wave equations  including \eqref{eq} with $u_0, u_1 \in C^\infty _0 (\mathbb{R})$ and  $a \in \mathbb{N}$ . In Dreher's paper \cite{dre}, he also showed the local solvability for $\partial^2 _t u =\partial_x (\vert\partial_x u\vert^{p-2} \partial_x u)$ with $p >5$ and $p \in \mathbb{N}$ and under the initial condition that $u_0, u_1 \in C^k _0 (\mathbb{R}^n)$ for a large natural number $k$. Due to the use of an estimate with loss of regularity (see \eqref{re-loss}), sufficient regularity assumptions need to be made on the initial values and coefficients. Their proof is based on the Nash-Moser implicit function theorem and the argument in Oleinik's paper \cite{ol}. As mentioned below, the standard energy inequality without a loss of the regularity does not work for the quasilinear wave equations with the time or spatial degeneracy. It also should be mentioned that \eqref{eq} can be solved by the Cauchy-Kovalevskaya theorem if $a\in \mathbb{N}$  and initial data are analytic.
On the other hand, there have also been constructions of solutions exhibiting transitions between degenerate and non-degenerate regimes (see  Hu and Wang \cite{HW}, Sugiyama \cite{s3} and Speck \cite{Speck2017}).
Next, we review results that establish the adverse effects of degeneracy on the regularity and well-posedness of solutions.  
There are several works proving that the qualitative properties of solutions change as the type of the equation varies.  
It has been proved by Lerner et al.\ that when an equation loses hyperbolicity and acquires ellipticity, the continuous dependence of solutions on the initial data breaks down, that is, the problem becomes ill-posed (\cite{Lerner2004} and \cite{LernerNguyenTexier2018}).
In this paper we deal with the breakdown of strict hyperbolicity, which is a more naive transition compared to the loss of hyperbolicity into ellipticity. 
Thus, such instabilities are not expected. 
However, we are able to prove a transition in the nature of solutions in which a loss of regularity occurs at the degenerate point.
The degeneracy of the equation is closely related to physical phenomena.  
For instance, in the case of wave equations in elasticity, degeneracy corresponds to fracture or yielding of the material.  
Our study also seems to be related to the study of the compressible Euler equations with physical vacuum. In Jang and  Masmoudi \cite{JM} and  Liu and Yang \cite{LY}, they have studied the following 1D damped Euler equations:
\begin{eqnarray*}\left\{
\begin{array}{ll}
\rho_t + (\rho u)_x =0, \\
\rho u_t + \rho u u_x + p(\rho)_x = - \rho u,
\end{array}\right.
\end{eqnarray*}
where $p(\rho)= \rho^\kappa$ with $\kappa >1$. 
When $\rho$ has zeros, this can also be regarded as a degeneracy of the equation.  
Such degeneracy corresponds to the vacuum state of a fluid.
The degeneracy of the equation is also related to the incompressible Navier--Stokes system. 
When the problem is posed in the half-space, a degenerating viscosity coefficient gives rise to a boundary layer. 
For the Prandtl equation governing this layer, it is known that—analogously to (weakly) hyperbolic equations—the Cauchy problem is well posed in analytic function classes, whereas it is ill posed in Sobolev spaces (see G\'erard-Varet and Dormy\cite{GerardVaretDormy2010}).

\subsection*{Idea of proof}
Our approach is based on generalized d'Alembert-type formulae derived
along degenerate characteristic curves. The main point is to show that,
when \(\beta<\alpha\), the leading order of the solution near the
degenerate point is governed by \(u_1\), not by \(u_0\).

For the linear equation with time-dependent coefficient \(c(t,x)\), the
generalized d'Alembert formula contains additional remainder terms
coming from \(c_t u\). To control these terms, we first prove a refined
upper estimate. More precisely, we introduce a time-dependent
regularized weight adapted to
\[
  x^\alpha+t x^\beta .
\]
This yields
\[
  |u(t,x)|\leq C(x^\alpha+t x^\beta ),
\]
and in particular
\[
  |u(t,x)|\leq Cx^\beta
\]
near \(x=0\). This refined estimate is used to absorb the remainder
terms in the lower bound argument.

For the lower bound, we use the positivity of \(u_1\) in the
generalized d'Alembert formula. In the linear case, the main positive
term is of order \(t x^{p+\beta}\), while the remainder terms are
controlled by
\[
  C t x^{p+\alpha}+C t^2 x^{p+\beta}.
\]
These terms can be absorbed by taking the spatial neighborhood and the
time interval sufficiently small. In the quasilinear case, the argument
is based on a successive iteration: starting from the lower estimate
with the order of \(u_0\), the iteration improves the order step by step
and yields, in the limit,
\[
  u(t,x)\geq Ctx^\beta .
\]

Together with the upper bound, this shows that the leading order changes
from \(x^\alpha\) to \(x^\beta\), which proves the loss of regularity at
the degenerate point.

\subsection*{Notation}
Throughout this paper, the constant \(C\) may change from line to line.
When it is necessary to emphasize the dependence of constants, we add
subscripts such as \(C_\delta\) or \(C_T\).  Constants denoted by
\(C_\delta\) may depend on \(\delta\), and constants denoted by \(C_T\)
may depend on \(T\). 

For a space-time domain \(D_T\), we denote the \(L^\infty\)-norm on
\(D_T\) by \(\|\cdot\|_{L^\infty(D_T)}\).  When
\(D_T=[0,T]\times[0,1]\), we also write
\[
  \|\cdot\|_{L^\infty_TL^\infty}
  =
  \|\cdot\|_{L^\infty([0,T]\times[0,1])}.
\]

\section{Proof of Theorem \ref{thl}}
Throughout the proofs of Theorems \ref{thl}, \ref{thq}, and
\ref{thq2}, we suppose that \(0<\delta_0\leq \delta\leq 1/2\).

Throughout Sections 2--4, for \(\zeta\in\mathbb R\) and
\(\varepsilon>0\), we use the notation
\[
  w_\zeta(x)=x^{-\zeta},
  \qquad
  w_{\zeta,\varepsilon}(x)=(x+\varepsilon)^{-\zeta}.
\]

To derive a generalized d'Alembert-type formula formally, we temporarily
impose stronger regularity conditions such that
\(u\in C^1([0,T]\times[0,\delta])\). This derivation will be justified
later. Integrating the equation \eqref{vdwv}, we have
\[
\partial_t\int_0^x u_t(t,\xi)\,d\xi
=
c(t,x)^2u_x(t,x)-c(t,0)^2u_x(t,0).
\]
Since \(c(t,0)=0\), this gives
\[
\partial_t\int_0^x u_t(t,\xi)\,d\xi
=
c(t,x)^2u_x(t,x).
\]

From this identity, setting
\[
\left\{
\begin{aligned}
w(t,x) &= \int_0^x u_t(t,\xi)\,d\xi + c(t,x)u(t,x),\\
z(t,x) &= \int_0^x u_t(t,\xi)\,d\xi - c(t,x)u(t,x),
\end{aligned}
\right.
\]
we obtain
\[
\left\{
\begin{aligned}
w_t - c(t,x)w_x
&=
\bigl(c_t(t,x)-c(t,x)c_x(t,x)\bigr)u,\\
z_t + c(t,x)z_x
&=
-\bigl(c_t(t,x)+c(t,x)c_x(t,x)\bigr)u.
\end{aligned}
\right.
\]

We denote by \(x_\pm(s)=x_\pm(s;t,x)\) the characteristic curves passing
through the point \((t,x)\) at time \(s=t\). These curves are defined as
the solutions to the initial value problem
\[
\frac{d}{ds}x_\pm(s)=\pm c(s,x_\pm(s)),
\qquad
x_\pm(t)=x.
\]
That is, \(x_\pm(s;t,x)\) satisfies
\begin{align} \label{int-c}
x_\pm(s)
=
x\pm \int_t^s c(\sigma,x_\pm(\sigma))\,d\sigma .
\end{align}
For the characteristic curves, we consider them only within the spatial
domain \([0,\delta]\). Characteristic curves that leave this interval
are not taken into account.

Along the characteristic curves \(x_\pm(s)=x_\pm(s;t,x)\), we have
\[
\frac{d}{ds}w(s,x_-(s))
=
\bigl(c_t-cc_x\bigr)(s,x_-(s))u(s,x_-(s)),
\]
and
\[
\frac{d}{ds}z(s,x_+(s))
=
-\bigl(c_t+cc_x\bigr)(s,x_+(s))u(s,x_+(s)).
\]
We set
\[
V_-(s)=\bigl(c_t-cc_x\bigr)(s,x_-(s)),
\qquad
V_+(s)=\bigl(c_t+cc_x\bigr)(s,x_+(s)).
\]

Thus, integrating from \(s=0\) to \(s=t\), we obtain
\begin{align}
w(t,x)
&=
w(0,x_-(0))
+
\int_0^t V_-(s)u(s,x_-(s))\,ds, \label{wfm}\\
z(t,x)
&=
z(0,x_+(0))
-
\int_0^t V_+(s)u(s,x_+(s))\,ds. \label{zfm}
\end{align}
Subtracting these two identities gives
\begin{align}
2c(t,x)u(t,x)
=&\, w(t,x)-z(t,x) \notag\\
=&\, w(0,x_-(0))-z(0,x_+(0)) \notag\\
&+
\int_0^t
\left\{
V_-(s)u(s,x_-(s))
+
V_+(s)u(s,x_+(s))
\right\}\,ds.
\label{wzV}
\end{align}

\begin{Prop}\label{just}
Under the conditions \eqref{27}, \eqref{28}, the formulas
\eqref{wfm}, \eqref{zfm}, and \eqref{wzV} hold.
\end{Prop}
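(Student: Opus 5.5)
We justify \eqref{wfm} and \eqref{zfm} under the weaker hypotheses \eqref{27}, \eqref{28}; \eqref{wzV} then follows by subtraction together with $w-z=2cu$. The difficulty is that $u$ is only $C^2$ away from $x=0$. So the plan is to perform the derivation on $[\varepsilon,\delta]$ and then let $\varepsilon\downarrow0$, using the continuity of $u_t$ and $cu_x$ up to $x=0$.

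\emph{Step 1: the flux identity without $C^1$ up to the boundary.}
Fix $0<\varepsilon<x\le\delta$. Integrating \eqref{vdwv} over $[\varepsilon,x]$, which is legitimate since $u\in C^2$ there, gives
\[
\partial_t\int_\varepsilon^x u_t\,d\xi=c(t,x)^2u_x(t,x)-c(t,\varepsilon)^2u_x(t,\varepsilon).
\]
By \eqref{28}, $cu_x$ is continuous on $[0,T]\times[0,\delta]$, hence bounded. Since $c(t,\varepsilon)\le L_1\varepsilon^p\to0$, we get $c^2u_x(t,\varepsilon)=c\cdot(cu_x)\to0$ uniformly in $t$. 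Since $u_t$ is continuous up to $x=0$, $\int_\varepsilon^x u_t\,d\xi\to\int_0^x u_t\,d\xi$ uniformly in $t$. Uniform convergence of the $t$-derivatives (to $c^2u_x(t,x)$, continuous in $t$) allows passage to the limit under $\partial_t$. Hence $W(t,x):=\int_0^x u_t\,d\xi$ is $C^1$ in $t$ with $W_t=c^2u_x$. In addition, $W_x=u_t$ for $x>0$, and $W$ is $C^1$ on $[0,T]\times(0,\delta]$. Consequently $w=W+cu$ and $z=W-cu$ are $C^1$ for $x>0$, and the transport equations for $w,z$ hold pointwise for $x>0$. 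The computation there is unchanged from the formal one: the terms $\mp c\,(cu_x)$ cancel against $\pm c\,u_t$, leaving $(c_t\mp cc_x)u$.

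\emph{Step 2: integration along characteristics.}
For $x>0$, the characteristics $x_\pm(s;t,x)$ are well defined because $c\in C^1$. They stay positive for $s\in[0,t]$: the line $x=0$ is itself a solution of the ODE ($c(s,0)=0$), and uniqueness forbids crossing it. We restrict to $(t,x)$ whose backward characteristics remain in $[0,\delta]$, as stipulated in the text. Along such a curve, $s\mapsto w(s,x_-(s))$ is $C^1$ with derivative $(c_t-cc_x)u$. Integrating from $0$ to $t$ gives \eqref{wfm}; the same argument for $z$ along $x_+$ gives \eqref{zfm}.

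\emph{Step 3: the point $x=0$.}
At $x=0$, both sides of \eqref{wfm} and \eqref{zfm} are continuous in $x$: $w,z,u$ are continuous up to $x=0$, and $x_\pm(\cdot;t,x)$ depends continuously on $x$. Letting $x\downarrow0$ therefore extends the formulas, and in that limit both sides vanish.

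The main obstacle is Step 1, namely exchanging $\partial_t$ with the limit $\varepsilon\to0$ and disposing of the boundary flux. This is exactly where the assumption that $cu_x$ (not $u_x$) is continuous up to the boundary, combined with the vanishing of $c$ at $0$, is used.
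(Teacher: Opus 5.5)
Your proof is correct and follows essentially the paper's approach. Both arguments truncate at $\varepsilon$ and use the uniform vanishing of the boundary flux $c^2u_x(\cdot,\varepsilon)=c\,(cu_x)(\cdot,\varepsilon)\to0$, together with the continuity of $u_t$ up to $x=0$. The only difference is the order of operations: you let $\varepsilon\to0$ in the flux identity first and then integrate the now pointwise transport equation along the characteristics, whereas the paper integrates the $\varepsilon$-truncated equation for $w_\varepsilon$ along $x_-$ first and lets $\varepsilon\to0$ afterwards.
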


\begin{proof}
We only justify \eqref{wfm}. For arbitrary \(\varepsilon>0\), we set,
for \(x\in(0,\delta]\),
\[
w_\varepsilon(t,x)
=
\int_\varepsilon^x u_t(t,\xi)\,d\xi
+
c(t,x)u(t,x).
\]
From \eqref{vdwv}, we have
\[
\begin{aligned}
(w_\varepsilon)_t-c(t,x)(w_\varepsilon)_x
&=
\bigl(c_t(t,x)-c(t,x)c_x(t,x)\bigr)u(t,x)  \\
&\quad
-c(t,\varepsilon)^2u_x(t,\varepsilon).
\end{aligned}
\]
Thus, along the characteristic curve \(x_-(s)=x_-(s;t,x)\), we obtain
\[
\begin{aligned}
w_\varepsilon(t,x)
=&\,
w_\varepsilon(0,x_-(0))
+
\int_0^t
V_-(s)u(s,x_-(s))\,ds \\
&-
\int_0^t
c(s,\varepsilon)^2u_x(s,\varepsilon)\,ds .
\end{aligned}
\]
Since \(c(t,0)=0\) and \(c(t,x)u_x(t,x)\in C([0,T]\times[0,\delta])\),
we have
\[
c(s,\varepsilon)^2u_x(s,\varepsilon)
=
c(s,\varepsilon)\{c(s,\varepsilon)u_x(s,\varepsilon)\}
\longrightarrow 0
\]
uniformly in \(s\in[0,T]\) as \(\varepsilon\to0\). Moreover,
\(w_\varepsilon(0,x_-(0))\to w(0,x_-(0))\). Hence, taking
\(\varepsilon\to0\), we obtain \eqref{wfm}. The formula \eqref{zfm} is
justified in the same way, and \eqref{wzV} follows by subtracting
\eqref{zfm} from \eqref{wfm}.
\end{proof}

\begin{lemma}\label{lem:char-bounds}
Fix $p \geq 1$. For any $t,s \in [0,T]$ and $x \in (0,\delta]$ such that $x_\pm (s;t,x) \in [0, \delta]$, we have
\begin{align*}
C^{-1} _T  x\leq x_\pm (s;t,x) \leq C_T x,
\end{align*}
here the constant $C_T$ is such that $C_T \to 1$ as $T \to 0$.
In particular, it follows that for  $\zeta  \in \R  $
\begin{align*}
C^{-1} _T \wz(x) \leq \wz(x_\pm (s)) \leq C_T \wz(x).
\end{align*}
\end{lemma}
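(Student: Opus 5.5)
We bound the logarithm of \(x_\pm(s)\) using the characteristic ODE together with the assumption \eqref{c-assumption}, namely \(0\le c(t,x)\le L_1x^p\). We only treat points with \(x_\pm(s)\in(0,\delta]\). The characteristic cannot reach \(0\) in finite time from a positive point, because \(c(t,0)=0\) and \(c\) is \(C^1\). Indeed, \(|c(t,y)|\le L_1 y^p\le L_1\delta^{p-1}y\) for \(p\ge1\), so \(y\equiv0\) is a solution and uniqueness (from the Lipschitz continuity of \(c\) in \(x\), since \(c\in C^1\)) forbids crossing it. Hence \(x_\pm(\sigma)>0\) for every \(\sigma\) between \(t\) and \(s\). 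In fact the whole segment of the curve stays in \((0,\delta]\), provided we restrict to the portion where it is defined in \([0,\delta]\), as the paper stipulates.

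On that segment we differentiate \(\log x_\pm(\sigma)\):
\[
\left|\frac{d}{d\sigma}\log x_\pm(\sigma)\right|
=\frac{c(\sigma,x_\pm(\sigma))}{x_\pm(\sigma)}
\le L_1 x_\pm(\sigma)^{p-1}\le L_1\delta^{p-1}\le L_1,
\]
using \(p\ge1\) and \(\delta\le1/2\). Integrating from \(t\) to \(s\) gives \(|\log x_\pm(s)-\log x|\le L_1|t-s|\le L_1T\). This yields
\[
e^{-L_1T}x\le x_\pm(s;t,x)\le e^{L_1T}x,
\]
so we may take \(C_T=e^{L_1T}\), which tends to \(1\) as \(T\to0\).

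The weighted statement follows by raising the bound to the power \(-\zeta\):
\[
w_\zeta(x_\pm(s))=x_\pm(s)^{-\zeta}\in\bigl[e^{-L_1|\zeta|T}x^{-\zeta},\;e^{L_1|\zeta|T}x^{-\zeta}\bigr].
\]
The constant here is \(C_T^{|\zeta|}\), which still tends to \(1\). Relabeling \(C_T\), with its dependence on \(\zeta\) understood, gives the claim.

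The only delicate point is making sure the log-derivative is legitimate, that is, that \(x_\pm\) stays strictly positive. This is handled by the uniqueness argument above: \(0\) is an equilibrium of the ODE, so a positive trajectory never reaches it. Everything else is a one-line Grönwall-type estimate.
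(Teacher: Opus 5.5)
Your proof is correct and is essentially the argument the paper intends: it defers to the proof of Lemma~\ref{es-cq}, which likewise uses uniqueness (with \(x\equiv 0\) as a solution) to keep the characteristic positive, and then integrates a uniform bound on \(\frac{d}{ds}\log x_\pm(s)\) to get \(C_T=e^{MT}\). The only difference is cosmetic. You bound \(c/x\) directly by \(L_1x^{p-1}\le L_1\) from \eqref{c-assumption}, whereas the paper's template gets the corresponding bound from the mean value theorem, which here would read \(c(s,y)/y=c_x(s,\xi)\le L_3\xi^{p-1}\).
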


\begin{proof}
    Since this lemma can be proved in the same manner as Lemma~3.1 below, we omit the proof here.
\end{proof}

From the second inequality in Lemma \ref{lem:char-bounds}, we also have that
\begin{align*}
C^{-1} _T  \wze(x) \leq \wze (x_\pm (s)) \leq C_T \wze (x). 
\end{align*}
If $\zeta \geq 0$, it holds that
\begin{align*}
\wze (x) \leq \wz (x).
\end{align*}

We will first show the  upper estimate for sufficiently small
\(T>0\):
\begin{align} \label{d-in}
|u(t,x)| \leq C_\delta\bigl(x^\alpha+t x^\beta\bigr),
\end{align}
where \(C_\delta\) is a positive constant depending on \(\delta\).
In particular, since \(\alpha>\beta\), this implies
\[
|u(t,x)|\leq C_\delta x^\beta
\]
on \([0,T]\times[0,\delta_0]\).

From \eqref{wzV}, we have
\begin{align}
2c(t,x)u(t,x)
=&\; \int_{x_+(0)}^{x_-(0)} u_1(y)\,dy \notag\\
&+ c(0,x_-(0))u_0(x_-(0))
 + c(0,x_+(0))u_0(x_+(0)) \notag\\
&+ \int_0^t
\left\{
V_-(s)u(s,x_-(s))
+
V_+(s)u(s,x_+(s))
\right\}\,ds .
\label{u-ab1}
\end{align}
By the mean value theorem, Lemma~\ref{lem:char-bounds}, and
\eqref{int-c}, the first term of \eqref{u-ab1} is estimated, for some
\(\xi\in(x_+(0),x_-(0))\), as
\begin{align}
\int_{x_+(0)}^{x_-(0)} u_1(y)\,dy
=&\; u_1(\xi)\bigl(x_-(0)-x_+(0)\bigr) \notag\\
\leq&\; Cx^\beta
\int_0^t
c(s,x_-(s))+c(s,x_+(s))\,ds \notag\\
\leq&\; C t x^{p+\beta}. \notag
\end{align}
From Lemma~\ref{lem:char-bounds} and the assumption
\(\alpha>\beta\), the second and third terms of \eqref{u-ab1} are
estimated by
\begin{align}
&c(0,x_-(0))u_0(x_-(0))
+
c(0,x_+(0))u_0(x_+(0)) \notag\\
&\qquad\leq Cx^p x^\alpha .
\notag
\end{align}
Moreover, by the assumptions on \(c_t\), \(c\), and \(c_x\), we have
\[
|V_\pm(s)|
\leq Cx^p
\]
along the characteristic curves. Therefore,
\begin{align*}
&\left|
\int_0^t
\left\{
V_-(s)u(s,x_-(s))
+
V_+(s)u(s,x_+(s))
\right\}\,ds
\right| \\
&\qquad\leq
Cx^p\int_0^t
\left(
|u(s,x_-(s))|+|u(s,x_+(s))|
\right)\,ds .
\end{align*}

We now introduce the regularized weight adapted to the estimate
\eqref{d-in}. Put
\[
\rho_\varepsilon(t,x)
=
(x+\varepsilon)^\alpha+t(x+\varepsilon)^\beta,
\qquad
W_\varepsilon(t,x)=\rho_\varepsilon(t,x)^{-1}.
\]
We multiply \eqref{u-ab1} by \(W_\varepsilon(t,x)/c(t,x)\) and take
the supremum over \([0,T]\times(0,\delta/2]\).

The terms involving \(u_1\) and \(u_0\) are bounded uniformly in
\(\varepsilon\). Indeed, using \(c(t,x)\geq L_0x^p\), we have
\[
\frac{W_\varepsilon(t,x)}{c(t,x)}\,t x^{p+\beta}
\leq
C\frac{t x^\beta}{(x+\varepsilon)^\alpha+t(x+\varepsilon)^\beta}
\leq C,
\]
and
\[
\frac{W_\varepsilon(t,x)}{c(t,x)}\,x^{p+\alpha}
\leq
C\frac{x^\alpha}{(x+\varepsilon)^\alpha+t(x+\varepsilon)^\beta}
\leq C.
\]

It remains to estimate the integral term in \eqref{u-ab1}. For the
part of the integral for which \(x_\pm(s;t,x)\leq \delta/2\), Lemma
\ref{lem:char-bounds} gives
\[
\rho_\varepsilon(s,x_\pm(s;t,x))
\leq
C_T\rho_\varepsilon(t,x).
\]
Hence
\[
\begin{aligned}
|u(s,x_\pm(s;t,x))|
&\leq
\left(
\sup_{[0,T]\times[0,\delta/2]}
W_\varepsilon(t,x)|u(t,x)|
\right)
\rho_\varepsilon(s,x_\pm(s;t,x))  \\
&\leq
C_T
\left(
\sup_{[0,T]\times[0,\delta/2]}
W_\varepsilon(t,x)|u(t,x)|
\right)
\rho_\varepsilon(t,x).
\end{aligned}
\]
Since \(|V_\pm(s)|\leq Cx^p\) along the characteristic curves, this
part is bounded, after multiplication by
\(W_\varepsilon(t,x)/c(t,x)\), by
\[
C\,T
\sup_{[0,T]\times[0,\delta/2]}
W_\varepsilon(t,x)|u(t,x)|.
\]

On the other hand, for the part of the integral for which
\(x_\pm(s;t,x)\geq \delta/2\), Lemma~\ref{lem:char-bounds} implies that
\(x\geq C_T^{-1}\delta/2\). Thus \(W_\varepsilon(t,x)\),
\(1/c(t,x)\), and \(V_\pm(s)\) are bounded by constants depending on
\(\delta\). Since \(u\) is continuous on \([0,T]\times[0,\delta]\), this
part is bounded by $C_\delta T.$
Combining these estimates, we obtain
\[
\sup_{[0,T]\times[0,\delta/2]}
W_\varepsilon(t,x)|u(t,x)|
\leq
C_1
+
C_2T
\sup_{[0,T]\times[0,\delta/2]}
W_\varepsilon(t,x)|u(t,x)|
+
C_\delta T.
\]
Thus, for sufficiently small \(T>0\),
\[
\sup_{[0,T]\times[0,\delta/2]}
W_\varepsilon(t,x)|u(t,x)|
\leq C.
\]
Taking \(\varepsilon\to0\), we obtain
\[
|u(t,x)|
\leq
C_\delta\bigl(x^\alpha+t x^\beta\bigr).
\]
This proves \eqref{d-in}.

Next we will show the lower estimate on \([0,T]\times[0,\delta_0]\)
\begin{align} \label{d-inl}
u(t,x)\geq Ctx^\beta
\end{align}
for sufficiently small \(T\) and \(\delta_0\).
Applying \eqref{d-in} to \eqref{u-ab1}, we have, for sufficiently small
\(\delta_0>0\) and \(T>0\),
\begin{align}
2c(t,x)u(t,x)
\geq&
\int_{x_+(0)}^{x_-(0)}u_1(y)\,dy
\notag\\
&-
\left|
\int_0^t
\left\{
V_-(s)u(s,x_-(s))
+
V_+(s)u(s,x_+(s))
\right\}\,ds
\right|.
\notag
\end{align}
Here we have used the non-negativity of the terms involving \(u_0\).

By the mean value theorem, Lemma~\ref{lem:char-bounds}, and
\eqref{int-c}, the first term is estimated from below as
\begin{align}
\int_{x_+(0)}^{x_-(0)}u_1(y)\,dy
=&\;
u_1(\xi)\bigl(x_-(0)-x_+(0)\bigr)
\notag\\
=&\;
u_1(\xi)
\int_0^t
c(s,x_-(s))+c(s,x_+(s))\,ds
\notag\\
\geq&\;
C x^\beta\int_0^t x^p\,ds
\notag\\
\geq&\;
C t x^{p+\beta},
\notag
\end{align}
where \(\xi\in(x_+(0),x_-(0))\).

On the other hand, from the assumptions on \(c_t\), \(c\), and \(c_x\),
we have, along the characteristic curves,
\[
 |V_\pm(s)|\leq Cx^p.
\]
Hence, by \eqref{d-in} and Lemma~\ref{lem:char-bounds},
\begin{align}
&\left|
\int_0^t
\left\{
V_-(s)u(s,x_-(s))
+
V_+(s)u(s,x_+(s))
\right\}\,ds
\right|
\notag\\
&\qquad\leq
C x^p
\int_0^t
\left(
|u(s,x_-(s))|+|u(s,x_+(s))|
\right)\,ds
\notag\\
&\qquad\leq
C x^p
\int_0^t
\left(x^\alpha+s x^\beta\right)\,ds
\notag\\
&\qquad\leq
C t x^{p+\alpha}
+
C t^2x^{p+\beta}.
\notag
\end{align}
Since \(\alpha>\beta\), we have
\[
x^{p+\alpha}\leq \delta_0^{\alpha-\beta}x^{p+\beta}
\qquad
\text{for }0<x\leq\delta_0.
\]
Therefore,
\begin{align}
2c(t,x)u(t,x)
\geq&
C_1t x^{p+\beta}
-
C_2t\delta_0^{\alpha-\beta}x^{p+\beta}
-
C_3t^2x^{p+\beta}
\notag\\
\geq&
C_4t x^{p+\beta},
\notag
\end{align}
by taking \(\delta_0>0\) and \(T>0\) sufficiently small. Since
\(c(t,x)\leq L_1x^p\), we obtain
\[
u(t,x)\geq Ctx^\beta.
\]
This proves \eqref{d-inl}.

\section{Proof of Theorem \ref{thq}}
We suppose that $\delta \leq 1/2$.
We set $w$ and $z$ as follows
\begin{eqnarray}\label{wz}\left\{
\begin{array}{ll}\displaystyle
w =\int_0 ^x u_t dx + \frac{u^{a+1}}{a+1}, \\
\displaystyle z =\int_0 ^x u_t dx - \frac{u^{a+1}}{a+1}.
\end{array}\right.
\end{eqnarray}
Then we have \eqref{eq}
\begin{align} \label{wz-eq}
\begin{cases}
w_t - u^a w_x = 0, \\
z_t + u^a z_x = 0.
\end{cases}
\end{align}

Let $x_{-} (s)$ and $x_{+} (s)$ be characteristic curves on the first and second equations of \eqref{wz-eq}
respectively. That is,  $x_{+} (s)$ and $x_{-} (s)$ are solutions to the following differential equations respectively:
 \begin{eqnarray}\label{cc}
\dfrac{d}{ds} x_{\pm} (s)=\pm u^a(s,x_{\pm} (s)).
\end{eqnarray}
When we emphasize that the characteristic curves go through $(t,x)$, we denote $x_{\pm} (s)$ by  $x_{\pm} (s; t,x)$.
That is, $x_{\pm} (s; t,x)$ satisfies that
\begin{eqnarray} \label{int-x}
x_{\pm} (s; t,x) = x \pm \int_t ^s  u^a (\tau, x_{\pm} (\tau; t,x)) d\tau.
\end{eqnarray}
On the characteristic curves, we have
\begin{align} \label{wz-qfm}
\begin{cases}
w(t, x) = w(0, x_{-}(0)), \\
z(t, x) = z(0, x_{+}(0)).
\end{cases}
\end{align}
For the characteristic curves, we consider them only within the spatial domain $[0,\delta]$ of $u$.  We note that the characteristic curves exist uniquely, since $u(t,x)^a$ is $C^1$ function on $[0,T] \times [0, \delta]$.
By choosing \( x_{\pm}(s) = x_{\pm}(s, t, x) \), we have the following generalized d'Alembert's formula:
\begin{align}
\frac{2 u^{a + 1}(t,x)}{a + 1}
&= w(t, x) - z(t, x) \notag \\
&= w(0, x_{-}(0)) - z(0, x_{+}(0)) \notag \\
&= \int_{x_{+}(0)}^{x_{-}(0)} u_1(\xi) \, d\xi
+ \frac{1}{a+ 1} \left( u_0^{a + 1}(x_{+}(0)) + u_0^{a + 1}(x_{-}(0)) \right). \label{we-df}
\end{align}
The derivations of \eqref{wz-qfm} and \eqref{we-df} can be justified in the same way as in the proof of Proposition \ref{just}. The next lemma plays a fundamental role in the derivation of weighted estimates.
This type of lemma has  already  been shown  in \cite{YHYS1}.  However, we give a simpler proof here.
\begin{lemma} \label{es-cq}
Let $\zeta \in \R$ and $a\geq 1$.
Assume that
\[
u\in C([0,T]\times[0,\delta]),\qquad
u(t,x)\geq 0,\qquad u(t,0)=0,
\]
and
\[
u^{a-1}u_x\in C([0,T]\times[0,\delta]).
\]
For $t,s \in [0,T]$ and $x\in(0,\delta]$, let
$x_\pm(s;t,x)$ be the characteristic curves defined by \eqref{cc}.
Then the following estimate holds as long as $x_\pm(\cdot;t,x)$ remains
within $[0,\delta]$:
\begin{eqnarray} \label{w-x-es}
C^{-1}_T x \leq x_{\pm}(s;t,x) \leq C_T x.
\end{eqnarray}
Here the constant $C_T$ satisfies $C_T\to1$ as $T\to0$.
In particular, the following estimates hold:
\begin{align}
C^{-1}_T \wze(x)
\leq \wze(x_\pm(s;t,x))
\leq C_T \wze(x), \label{1} \\
C^{-1}_T \wz(x)
\leq \wz(x_\pm(s;t,x))
\leq C_T \wz(x). \label{2}
\end{align}
\end{lemma}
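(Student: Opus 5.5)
The estimate \eqref{w-x-es} will come from a Gronwall argument applied to the characteristic ODE \eqref{cc}. The key point is that the speed $u^a$ vanishes at $x=0$ at least linearly, so
\[
u(s,y)^a \le M y
\qquad\text{for } (s,y)\in[0,T]\times[0,\delta],
\]
with $M$ independent of $T$ (for $T\le T_0$).

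\textbf{Step 1: a Lipschitz-type bound on the speed.} For $a\ge1$ and $y>0$ I would write
\[
(u^a)_y=a\,u\cdot u^{a-1}u_y \quad\text{wherever } u>0.
\]
Since $u$ and $u^{a-1}u_x$ are continuous on the compact set $[0,T_0]\times[0,\delta]$, the bound $|(u^a)_y|\le a\|u\|_{L^\infty}\|u^{a-1}u_x\|_{L^\infty}=:M$ holds wherever $u>0$. Zeros of $u$ with $y>0$ need a little care. At such points $u^a$ is still continuous, and on any interval where $u>0$ the derivative bound holds. Also $u^a(s,\cdot)$ is absolutely continuous on $[\epsilon,\delta]$, because $u\in C^2$ there, so $(u^a)_y=au^{a-1}u_y$ holds a.e. and equals $0$ where $u=0$ (when $a>1$). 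When $a=1$, $u_y$ itself is bounded by hypothesis. Hence
\[
u^a(s,y)=u^a(s,y)-u^a(s,0)\le M y,
\]
using $u(s,0)=0$ and letting $\epsilon\to0$ by continuity. Nonnegativity of $u$ is used so that $u^a$ makes sense and $u^a\ge0$.

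\textbf{Step 2: Gronwall along the characteristics.} Along $x_\pm(s)=x_\pm(s;t,x)$ we have
\[
\Bigl|\frac{d}{ds}x_\pm(s)\Bigr|=u^a(s,x_\pm(s))\le M x_\pm(s),
\]
as long as the curve stays in $[0,\delta]$. Hence
\[
\Bigl|\frac{d}{ds}\log x_\pm(s)\Bigr|\le M,
\]
which gives
\[
e^{-M|s-t|}x\le x_\pm(s)\le e^{M|s-t|}x .
\]
One must rule out the curve reaching $0$ in finite time. This follows because $\log x_\pm$ has bounded derivative, so it cannot tend to $-\infty$ on a bounded interval; equivalently, $y\equiv0$ solves the ODE, and uniqueness via the Lipschitz-type bound of Step 1 prevents crossing it. With $|s-t|\le T$ this gives \eqref{w-x-es} with $C_T=e^{MT}\to1$ as $T\to0$.

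\textbf{Step 3: the weighted consequences.} For \eqref{2}, note that $w_\zeta(x_\pm)=x_\pm^{-\zeta}$ lies between $C_T^{-|\zeta|}x^{-\zeta}$ and $C_T^{|\zeta|}x^{-\zeta}$; renaming $C_T^{|\zeta|}$ as $C_T$ keeps the property $C_T\to1$. For \eqref{1}, if $C_T^{-1}x\le x_\pm\le C_Tx$ with $C_T\ge1$, then
\[
C_T^{-1}(x+\varepsilon)\le x_\pm+\varepsilon\le C_T(x+\varepsilon),
\]
and raising to the power $-\zeta$ gives the claim uniformly in $\varepsilon$.

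\textbf{Main obstacle.} The main difficulty is Step 1: obtaining the linear bound $u^a\le My$ uniformly up to $x=0$ using only continuity of $u^{a-1}u_x$ on the closed set. This is where $a\ge1$, $u\ge0$ and $u(t,0)=0$ are all used, and where possible interior zeros of $u$ must be handled. I would handle them through the absolute continuity argument on $[\epsilon,\delta]$ and then pass to the limit $\epsilon\to0$.
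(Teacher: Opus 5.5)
Your proposal follows the paper's route. You get a linear bound $u^a(s,y)\le My$ from the mean value theorem and $u(s,0)=0$, then $\bigl|\frac{d}{ds}\log x_\pm(s)\bigr|\le M$, then uniqueness (with $x\equiv0$ a solution) to keep the characteristic away from $x=0$, giving $C_T=e^{MT}$. Your Step 3 just spells out what the paper treats as immediate.

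There is a slip in Step 1. The chain rule gives $(u^a)_y=a\,u^{a-1}u_y$, not $a\,u\cdot u^{a-1}u_y$, so the correct constant is $M=a\|u^{a-1}u_x\|_{L^\infty([0,T]\times[0,\delta])}$. With your constant $a\|u\|_{L^\infty}\|u^{a-1}u_x\|_{L^\infty}$, the inequality $u^a\le My$ can fail whenever $\|u\|_{L^\infty}<1$. For example, take $a=1$, $u(s,y)=y$ and $\delta\le 1/2$: then your $M=\delta$, and the claim becomes $y\le\delta y$, which is false.

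The corrected version is also simpler, and it removes the need for your discussion of interior zeros. Since $r\mapsto r^a$ is $C^1$ on $[0,\infty)$ for $a\ge1$ and $u\ge0$ is $C^1$ on $(0,\delta]$, the function $u^a(s,\cdot)$ is $C^1$ on $(0,\delta]$ and continuous on $[0,\delta]$. The mean value theorem on $[0,y]$ then gives $u^a(s,y)=y\,a\,u^{a-1}u_x(s,\xi)\le My$ directly, exactly as in the paper.
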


\begin{proof}
We only prove the estimate for \(x_+(s;t,x)\). The estimate for
\(x_-(s;t,x)\) is obtained in the same way.

First we note that \(u^a\) is Lipschitz continuous with respect to
\(x\), uniformly in \(t\). Indeed, on \((0,\delta]\),
\[
\partial_x(u^a)=a u^{a-1}u_x,
\]
and the right-hand side is bounded on \([0,T]\times[0,\delta]\).
Hence the characteristic equation \eqref{cc} has a unique solution.
Moreover, since \(u(t,0)=0\), \(x\equiv0\) is also a solution of
\eqref{cc}. By uniqueness, a characteristic starting from \(x>0\)
cannot meet \(x=0\). Therefore
\[
x_+(s;t,x)>0
\]
as long as the characteristic remains in \([0,\delta]\).

From \eqref{cc}, we have
\[
\frac{d}{ds}x_+(s;t,x)
=
u(s,x_+(s;t,x))^a.
\]
Thus
\[
\frac{d}{ds}\log x_+(s;t,x)
=
\frac{u(s,x_+(s;t,x))^a}{x_+(s;t,x)}.
\]
Since \(u(s,0)=0\), the mean value theorem gives
\[
\frac{u(s,x_+(s;t,x))^a}{x_+(s;t,x)}
=
a u(s,\xi)^{a-1}u_x(s,\xi)
\]
for some \(\xi\in(0,x_+(s;t,x))\). Hence
\[
\left|
\frac{d}{ds}\log x_+(s;t,x)
\right|
\leq
a\|u^{a-1}u_x\|_{L^\infty([0,T]\times[0,\delta])}.
\]
Integrating this inequality between \(t\) and \(s\), we obtain
\[
e^{-MT}x
\leq
x_+(s;t,x)
\leq
e^{MT}x,
\]
where
\[
M=a\|u^{a-1}u_x\|_{L^\infty([0,T]\times[0,\delta])}.
\]
The same argument applies to \(x_-(s;t,x)\), and therefore
\eqref{w-x-es} follows with \(C_T=e^{MT}\).

Finally, \eqref{1} and \eqref{2} follow from \eqref{w-x-es} and the
definitions of \(\wze\) and \(\wz\). Since \(C_T\to1\) as
\(T\to0\), the proof is complete.
\end{proof}

First we prove the following  upper estimate:
\begin{Prop}\label{33}
Under the same assumption of Theorem \ref{thq}, it follows that
\begin{align} \label{d-up}
u(t,x) \leq C x^{\beta} .
\end{align}
\end{Prop}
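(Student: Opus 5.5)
We plan to read the upper bound off directly from the generalized d'Alembert formula \eqref{we-df}. In the quasilinear case there is no remainder integral, so no weighted bootstrap like the one used for \eqref{d-in} is needed. The two ingredients are an estimate of the length of the characteristic base $x_-(0)-x_+(0)$ and the two-sided comparison of Lemma~\ref{es-cq}. Throughout, fix $(t,x)\in[0,T]\times(0,\delta_1]$ with $\delta_1$ small enough that, by Lemma~\ref{es-cq}, both characteristics $x_\pm(\cdot;t,x)$ stay inside $[0,\delta]$ and satisfy $C_T^{-1}x\le x_\pm(s;t,x)\le C_Tx$.

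\emph{Step 1: the $u_0$-terms.} By \eqref{inid} and Lemma~\ref{es-cq},
\[
u_0^{a+1}(x_\pm(0))\le K_1^{a+1}C_T^{\alpha(a+1)}x^{\alpha(a+1)} .
\]

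\emph{Step 2: the $u_1$-term.} From \eqref{int-x},
\[
x_-(0)-x_+(0)=\int_0^t\bigl(u^a(s,x_-(s))+u^a(s,x_+(s))\bigr)\,ds .
\]
We first need a crude bound on $u^a$ near $0$. Since $u(s,0)=0$, the mean value theorem as in the proof of Lemma~\ref{es-cq} gives $u^a(s,y)\le My$ with $M=a\|u^{a-1}u_x\|_{L^\infty}$. Hence $x_-(0)-x_+(0)\le CTx$. Since $u_1(\xi)\le K_3C_T^{\beta}x^\beta$ for $\xi$ between $x_+(0)$ and $x_-(0)$, we obtain
\[
\int_{x_+(0)}^{x_-(0)}u_1\,d\xi\le Ctx^{1+\beta}.
\]

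\emph{Step 3: combining.} Inserting Steps 1 and 2 into \eqref{we-df} yields
\[
u^{a+1}(t,x)\le C\bigl(tx^{1+\beta}+x^{\alpha(a+1)}\bigr),
\qquad\text{i.e.}\qquad
u\le C\bigl(x^{(1+\beta)/(a+1)}+x^\alpha\bigr).
\]
This is not yet $x^\beta$, because $(1+\beta)/(a+1)$ may be smaller than $\beta$.

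\emph{Step 4: iteration (the main obstacle).} To reach the exponent $\beta$ we feed the improved bound back into Step 2. Suppose $u\le Cx^{\gamma}$ on $[0,T]\times[0,\delta_1]$ with some $\gamma\le\beta$. Then $u^a\le Cx^{a\gamma}$ along the characteristics, so
\[
x_-(0)-x_+(0)\le Ctx^{a\gamma},
\qquad
u^{a+1}\le C\bigl(tx^{\beta+a\gamma}+x^{\alpha(a+1)}\bigr).
\]
This gives the new exponent
\[
\gamma'=\min\Bigl(\frac{\beta+a\gamma}{a+1},\,\alpha\Bigr).
\]
Since $\gamma<\beta$ implies $\gamma'>\gamma$, the map $\gamma\mapsto(\beta+a\gamma)/(a+1)$ has $\beta$ as its attracting fixed point, and $\beta<\alpha$. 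Starting from $\gamma_0=1/a$ (or directly from Step 3), the exponents $\gamma_n$ increase to $\beta$. Two issues must be handled here.

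\emph{(a) Convergence of constants.} A naive induction lets the constants blow up, so we instead argue directly at the fixed point. Set
\[
A=\sup_{[0,T]\times(0,\delta_1]}x^{-\beta}u .
\]
This quantity is finite once $\gamma_n\ge\beta$. Alternatively we use the regularized weight $(x+\varepsilon)^{-\beta}$, as in the proof of \eqref{d-in}, so that $A_\varepsilon<\infty$ a priori. Steps 1--2 then give
\[
A_\varepsilon^{a+1}\le C\bigl(TA_\varepsilon^{a}+1\bigr).
\]
Because the power $a+1$ on the left exceeds the power $a$ on the right, this forces $A_\varepsilon\le C$ uniformly in $\varepsilon$, and letting $\varepsilon\to0$ yields \eqref{d-up}.

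\emph{(b) The a priori comparison $u(s,x_\pm(s))\approx u(s,\cdot)$ at $x$.} In the bound on $u^a$ along characteristics we need this comparison only up to powers of $C_T$, which Lemma~\ref{es-cq} supplies through \eqref{1}.
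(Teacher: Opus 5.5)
Your part (a) is essentially the paper's proof: you multiply \eqref{we-df} by $(x+\varepsilon)^{-\beta(a+1)}$, use Lemma~\ref{es-cq} to move the weight along the characteristics, obtain $A_\varepsilon^{a+1}\le C(TA_\varepsilon^{a}+1)$ uniformly in $\varepsilon$, and let $\varepsilon\to0$; this makes the crude bound and the exponent iteration in Steps 2--4 superfluous. The one detail you still need is the one the paper handles by splitting at $\delta/2$. For $x$ near the right end of $(0,\delta_1]$, the characteristic $x_-(s)$ can leave the region over which $A_\varepsilon$ is taken. There you should bound $u$ by $\|u\|_{L^\infty}$ times a $\delta$-dependent weight constant, which gives $A_\varepsilon^{a+1}\le CT\max(A_\varepsilon,C_\delta)^a+C$ and still closes the argument.
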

\begin{proof}
Before proceeding to the proof, we remark that in the argument of this proposition, the non-negativity of $u$ is used only to define $u^a$ and $u^{a+1}$.
Hence, if $a \in \N$, the non-negativity assumption is not required, and instead of the estimate stated in the proposition, the following estimate holds.
\begin{align} \label{3}
|u(t,x)| \leq C x^{\beta} .
\end{align}
Hereafter, we take $T>0$ sufficiently small so that if $x \in [0,\delta/2]$, then $x_\pm(s;t,x) \in [0,\delta]$ for all $s \in [0,t].$
In the same way as in the proof of Theorem \ref{thl}, applying Lemma \ref{es-cq}, the right hand side of \eqref{we-df} is estimated as follows
\begin{align*}
\int_{x_{+}(0)}^{x_{-}(0)} u_1(\xi) \, d\xi
&+ \frac{1}{a+ 1} \left( u_0^{a+ 1}(x_{+}(0)) + u_0^{a + 1}(x_{-}(0)) \right)  \\
\leq & C_1x^{\beta}  \int_0 ^t u(s,x_- (s))^a  + u(s,x_+ (s))^a ds +C_2 x^{\al(a+1)}.
\end{align*}
In the following, we proceed in the same manner as in Theorem \ref{thl}.
That is, we multiply both sides by $\wbe(x)^{a+1}$ and then take the supremum norm, restricting to $x \in [0,\delta/2]$.
For the region $x \geq \delta/2$, we instead employ the boundedness of the weight depending on $\delta$, and estimate the term $\int_{0}^{t} u(s,x_\pm(s))^a  ds$.
By multiplying by $\wbe (x)^{a+1}$, from \eqref{1} and the assumption $\al \geq \bt $ and the fact that $w_{\bt,\varepsilon} (x) \leq w_{\bt} (x)$, we have that
\begin{align*}
(\wbe (x) u(t,x))^{a+1} \leq & C_1  T \sup_{[0,T]\times[0,\delta/2]} (\wbe (x) u(t,x))^{a} \notag \\
& +  CT\sup_{[0,T]\times[\delta/2,\delta]}(\wbe (x) u(t,x))^{a}  + C_2,
\end{align*}
from which, we have that
\begin{align*}
 (\sup_{[0,T]\times[0,\delta/2]} \wbe (x) u(t,x))^{a} ( \sup_{[0,T]\times[0,\delta/2]} \wbe (x) u(t,x) -C_1 T) \leq C_\delta T +  C_2 .
\end{align*}
Thus we have the uniform boundedness of $ \wbe (x) u(t,x)$ with $\varepsilon$.
Taking $\varepsilon \rightarrow 0$, we obtain the desired estimate \eqref{d-up}.
\end{proof}

Next we show the lower estimate:
\begin{Prop}\label{34}
Under the same assumption of Theorem \ref{thq}, it follows that for sufficiently small $T>0$
\begin{align} \label{l-d}
    u(t,x) \geq C t x^{\beta} .
\end{align}
\end{Prop}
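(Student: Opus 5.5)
The plan is to bootstrap a lower bound through the generalized d'Alembert formula \eqref{we-df}, improving the spatial exponent from $\alpha$ towards $\beta$ and the time exponent towards $1$ at each step. As in Proposition \ref{33}, I take $T$ small so that for $x\in(0,\delta/2]$ the characteristics $x_\pm(s;t,x)$ stay in $[0,\delta]$ for $s\in[0,t]$. I also restrict to $x\le\delta_1\le\delta/2<1$ and $T<1$.

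Since $u_1\ge K_2x^\beta>0$, $x_-(0)\ge x_+(0)$, and Lemma \ref{es-cq} gives $x_\pm(0)$ comparable to $x$, the mean value theorem and \eqref{int-x} yield
\[
\int_{x_+(0)}^{x_-(0)}u_1\,d\xi\ \ge\ c_1x^\beta\int_0^t\bigl(u(s,x_-(s))^a+u(s,x_+(s))^a\bigr)\,ds .
\]
In the same way the $u_0$-terms are bounded below by $c_2x^{\alpha(a+1)}$. Both terms in \eqref{we-df} are nonnegative, since $u\ge0$ by \eqref{31}. Keeping only the $u_0$-term gives the starting bound
\[
u(t,x)\ge A_0t^{\gamma_0}x^{e_0},\qquad \gamma_0=0,\ e_0=\alpha .
\]

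For the inductive step, suppose $u(s,y)\ge A_ks^{\gamma_k}y^{e_k}$ on $[0,T]\times(0,\delta_1]$. I insert this into the $u_1$-term, using Lemma \ref{es-cq} to replace $x_\pm(s)^{e_k}$ by $C_T^{-e_k}x^{e_k}$. The $e_k$ stay in $[\beta,\alpha]$, so these constants are uniform in $k$. This gives
\[
\frac{2u^{a+1}}{a+1}\ \ge\ c_1x^\beta\cdot 2A_k^aC^{-1}x^{ae_k}\int_0^ts^{a\gamma_k}\,ds=\frac{c\,A_k^a}{a\gamma_k+1}\,t^{a\gamma_k+1}x^{\beta+ae_k}.
\]
Taking the $(a+1)$-th root, the bound holds at step $k+1$ with
\[
e_{k+1}=\frac{\beta+ae_k}{a+1},\qquad \gamma_{k+1}=\frac{a\gamma_k+1}{a+1},\qquad A_{k+1}^{a+1}=\frac{c\,A_k^a}{a\gamma_k+1}\ \ge\ c'A_k^a .
\]
Here I use $\gamma_k\le1$, so $a\gamma_k+1\le a+1$. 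These are contractions with factor $a/(a+1)$. Hence $e_k\downarrow\beta$ and $\gamma_k\uparrow1$. Likewise $\log A_{k+1}\ge\frac{a}{a+1}\log A_k+\frac{\log c'}{a+1}$, so $\liminf\log A_k\ge\log c'$, and $A_k\ge A_*>0$ uniformly in $k$.

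Finally, I fix $(t,x)\in(0,T]\times(0,\delta_1]$ and let $k\to\infty$. Then $t^{\gamma_k}x^{e_k}\to tx^\beta$, so $u(t,x)\ge A_*tx^\beta$, which is \eqref{l-d}; the case $t=0$ or $x=0$ is trivial. I expect the main obstacle to be the uniformity of the constants across the iteration. The comparison factors from Lemma \ref{es-cq} raised to the powers $e_k$, and the lower bounds on $x_-(0)-x_+(0)$ and on $\xi$, must not degenerate with $k$. This is why the exponents must stay bounded and the characteristics must remain inside $[0,\delta]$. I would verify these points first and otherwise follow the scheme as stated. Combined with Proposition \ref{33}, this gives Theorem \ref{thq}.
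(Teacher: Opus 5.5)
Your scheme is the paper's argument: the base bound $u\ge C_1x^\alpha$ comes from the $u_0$-terms of \eqref{we-df}. The iteration then uses the same recurrences $e_{k+1}=(\beta+ae_k)/(a+1)$ and $\gamma_{k+1}=(1+a\gamma_k)/(a+1)$, together with $A_{k+1}^{a+1}\ge c'A_k^{a}$, and you pass to the limit $k\to\infty$. Your check that the constants stay uniform in $k$ is correct.

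However, one step fails as written. In the inductive step you insert $u(s,y)\ge A_ks^{\gamma_k}y^{e_k}$ at both $y=x_+(s;t,x)$ and $y=x_-(s;t,x)$; this is where your factor $2$ comes from. But that bound is only known for $y\in(0,\delta_1]$. Since $u\ge0$, \eqref{int-x} gives $x_-(s;t,x)=x+\int_s^t u^a\,d\tau\ge x$ for $s\le t$. So for $x$ close to $\delta_1$ the point $x_-(s;t,x)$ can lie to the right of $\delta_1$, where you have no step-$k$ information. The induction, as you set it up on the fixed interval $(0,\delta_1]$, therefore does not close. Keeping the characteristics inside $[0,\delta]$, which you flagged, is not enough. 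What matters is that they stay inside the smaller region where the inductive bound has actually been proved.

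The fix is the one the paper uses. Because $u\ge0$, the contribution $\int_0^t u(s,x_-(s))^a\,ds$ is nonnegative and can simply be dropped, leaving $u(t,x)^{a+1}\ge C_0x^\beta\int_0^t u(s,x_+(s))^a\,ds$. Since $x_+(s;t,x)\le x\le\delta_1$, you only ever evaluate the inductive bound inside its domain. This changes your constant $c$ by a factor $1/2$, and the rest of your argument goes through verbatim.

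Alternatively, you could keep both terms by choosing $\delta_1$ with $C_T\delta_1\le\delta/2$ and using your assumptions $T<1$, $\delta_1<1$. On $[\delta_1,\delta/2]$ the base bound gives $u\ge C_1y^\alpha\ge C_1\delta_1^{\alpha-\beta}s^{\gamma_k}y^{e_k}$, which extends the step-$k$ bound with constant $\min(A_k,C_1\delta_1^{\alpha-\beta})$. This route would have to be stated explicitly, and it is more cumbersome than discarding the $x_-$-term.
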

\begin{proof}
In this proof, we  also take $T>0$ sufficiently small so that if $x \in [0,\delta/2]$, then $x_\pm(s;t,x) \in [0,\delta]$ for all $s \in [0,t].$
First we note from the non-negativity of \(u(t,x)\) that
\begin{align} \label{ch-re}
x_-(s;t,x)\geq x\geq x_+(s;t,x),
\qquad 0\leq s\leq t .
\end{align}
Indeed, by \eqref{int-x}, for \(0\leq s\leq t\),
\[
x_-(s;t,x)
=
x+\int_s^t u^a(\tau,x_-(\tau;t,x))\,d\tau
\geq x,
\]
and
\[
x_+(s;t,x)
=
x-\int_s^t u^a(\tau,x_+(\tau;t,x))\,d\tau
\leq x.
\]
 From the choice of initial data, applying Lemma \ref{es-cq} to \eqref{we-df}, we have
\begin{align*} 
  \frac{2u^{a+1} (t,x)}{a+1} \geq  Cx^{\al(a+1)} ,
\end{align*}
which implies that
\begin{align}
    u(t,x) \geq C_1 x^{\al}. \label{1st}
\end{align}
In the same way as in the upper estimate, \eqref{we-df} yields that
\begin{align}
u(t,x)^{a+1} \geq & C_0 x^{\beta} \int_0 ^t (u^a (s, x_+ (s)) + u^a (s,x_- (s)))ds \notag \\
\geq & C_0 x^{\beta}  \int_0 ^t u^a (s, x_+ (s))ds, \label{pr}
\end{align}
here the non-negativity of $u$ is used.
By \eqref{ch-re}, the trajectory $x_{-}(s)$ may extend beyond the interval $[0,\delta/2]$, but this no longer needs to be taken into account.
Applying \eqref{1st} estimate to \eqref{pr}, we have that on [0,T]
\begin{align*}
u(t,x)^{a+1} \geq C_0   C^a_1 C^a _2  x^{\bt +a\alpha}t ,
\end{align*}
where the constant $C_2$ appearing here is the same as that in Lemma \ref{es-cq}.
By repeating this procedure, we obtain a lower bound for $u$.
Suppose that, after $n$ iterations, the following inequality is obtained.
\begin{align*}
u(t,x) \geq K_n t^{d_n} x^{c _n}, 
\end{align*}
where $K_n, d_n , c_n$ are positive constants.
Substituting this estimate into \eqref{pr}, we obtain that
\begin{align*}
u^{a+1}(t,x) 
&\geq C_0 x^{\beta} \cdot K_n^a C_2 ^{a c_n}  x^{c_n a}
   \int_0^t s^{a d_n} \, ds \\
&= C_0 x^{\beta + c_n a} \cdot K_n^a C_2 ^{ac_n} 
   \frac{1}{1 + a d_n} t^{1 + a d_n}.
\end{align*}
Thus we have
\[
u(t,x)\ge 
\left(\frac{1}{1+a d_n}\right)^{\frac{1}{a+1}}
C_0^{\frac{1}{a+1}}
C_2^{\theta c_n}
K_n^{\theta}
t^{\frac{1}{a+1}+\theta d_n}
x^{\frac{\beta}{a+1}+\theta c_n}.
\]
where we put $\theta =a/(1+a)$. Hence, the following recurrence relations are derived.
\begin{align*}
d_{n+1}  &= \frac{1}{a+1}+\theta d_n,\\[2mm]
c_{n+1}&= \frac{\bt}{a+1} + \theta c_n,
\end{align*}
and
\[
K_{n+1}
= \left(\frac{1}{1+a d_n}\right)^{\frac{1}{a+1}}
  C_0^{\frac{1}{a+1}}\, C_2^{\theta c_n}\, K_n^{\theta}
= \left(\frac{C_0\, C_2^{ac_n}}{1+a d_n}\right)^{\frac{1}{a+1}} K_n^{\theta}.
\]
The first and second recurrence relations can be solved easily, and we obtain the following.
\begin{align*}
d_n      &= 1 + \theta^{\,n}\,(d_0-1),\\[2mm]
c_n      &= \beta + \theta^{\,n}\,(c_0-\beta),
\end{align*}
where $d_0 =0$, $c _0= \alpha$. From the boundedness and the positivity of $d_n$ and $c_n$, we see that there exists a positive constant $C_3$ such that
\begin{align*}
K_{n+1} \geq C_3 K_n ^\theta.
\end{align*}
Solving this recurrence inequality, we find that
\[
K_n \ge K_0^{\theta^{\,n}}\, C_3^{\frac{1-\theta^{\,n}}{1-\theta}} ,
\]
where $K_0 =C_1$.
Therefore, we have that
\begin{align*}
u(t,x) \geq  C_1 ^{\theta^{\,n}}\, C_3^{\frac{1-\theta^{\,n}}{1-\theta}} t^{1 - \theta^{\,n}\,} x^{\beta + \theta^{\,n}\,(\alpha-\beta)}.
\end{align*}
Thus, taking $n \rightarrow \infty$, we have \eqref{l-d} from $\theta^n \rightarrow 0$.
\end{proof}
From Propositions \ref{33} and \ref{34}, the proof of Theorem \ref{thq} is completed.

\begin{remark}[On the positivity of $u(t,x)$]
As pointed out in Remark \ref{re1}, the non-negativity of $u(t,x)$ is not required if $u^m u_{xx}$ is bounded for some natural number $m$ such that $m <2a$ and if $a$ is a natural number.  
In fact, integrating  \eqref{eq} on $[0,t]$ twice, we have
\begin{align*}
 u(t,x) = u_0(x) + t u_1 (x) + \int_0^t \int_0 ^s u^{2a} u_{xx} (\tau,x) + 2a u^{2a-1} u^2_x (\tau,x)d\tau ds. 
\end{align*}
We note that the assumption that $a \in \N$ is used to define $u^a$.
Applying the boundedness of $u^m u_{xx}$ and $u^{a-1} u_x$ together with inequality \eqref{3} (which holds even without the non-negativity assumption), we obtain
\begin{align*}
    u(t,x) \geq K_0 x^{\alpha} + t K_2 x^{\beta} - C t^2 ( x^{\beta(2a-m)} + x^{\bt}).
\end{align*}
Since $a$ is a natural number, we have $2a-m \geq 1$, and hence, for sufficiently small $T>0$,
\[
    u(t,x) \geq C t x^{\beta}.
\]
In other words, not only the non-negativity but also the lower bound estimate stated in Theorem \ref{thq} can be derived.  
\end{remark}

\section{Proof of Theorem \ref{thq2}}
We prove Theorem~\ref{thq2}.
Given the Riemann invariants \eqref{wz}, the solution to \eqref{veq} formally satisfies the following hyperbolic system:
\begin{eqnarray}\label{wzv}\left\{
\begin{array}{ll}\displaystyle
w_t - u^{a} w_x = -\,a\int_{0}^{x} u^{\,2a-1}(t,\xi)\,\bigl(u_x(t,\xi)\bigr)^2\,d\xi, \\\displaystyle
z_t + u^{a} z_x = -\,a\int_{0}^{x} u^{\,2a-1}(t,\xi)\,\bigl(u_x(t,\xi)\bigr)^2\,d\xi.
\end{array}\right.
\end{eqnarray}
The derivations of \eqref{wzv} and \eqref{we-df2} can be justified
in the same way as in the proof of Proposition~\ref{just}, by first
integrating over \([\varepsilon,x]\) and then letting
\(\varepsilon\downarrow0\).

In the same way as in the derivation of \eqref{we-df}, we have that
\begin{align}
\frac{2 u^{a + 1}(t,x)}{a + 1}
=& \int_{x_{+}(0)}^{x_{-}(0)} u_1(\xi) \, d\xi
+ \frac{1}{a + 1} \left( u_0^{a + 1}(x_{+}(0)) + u_0^{a + 1}(x_{-}(0)) \right) \notag \\
&- R(t,x),
\label{we-df2}
\end{align}
where we put the positive remainder term 
\begin{align*}
R(t,x) =   a \int_0 ^t \int_{x_+ (\tau)} ^{x_- (\tau)} u^{\,2a-1}(\tau,\xi)\,\bigl(u_x(\tau,\xi)\bigr)^2\,d\xi d\tau.
\end{align*}
Since $2a-1 \geq 1$ from $a \geq 1$, from the boundedness of $u^{a-1}u_x, u$ and the mean value theorem, $R$ can be estimated by
\begin{align*}
R(t,x) \leq & C \int_0 ^t \int_{x_+ (\tau)} ^{x_- (\tau)} u(\tau,\xi)\,d\xi  d\tau \\
\leq & C \int_0 ^t (x_- (\tau) - x_+ (\tau))u(\tau, \xi) d\tau \\
=&  C \int_0 ^t \int_\tau ^t u(\tau', x_- (\tau'))^a +  u(\tau', x_+ (\tau'))^a d\tau' u(\tau, \xi) d\tau
\end{align*}
for $\xi \in [x_+ (\tau), x_- (\tau)]$. 
Thus, from the same argument as in the proof of Proposition \ref{33}, we obtain the upper estimate on $x \in [0,\delta/2]$ for sufficiently small $T>0$
\begin{align}\label{44}
u(t,x) \leq Cx^{\bt}.
\end{align}
To derive the lower bound, we estimate the remainder term more precisely.
Put
\[
A(s)=u(s,x_-(s;t,x))^a+u(s,x_+(s;t,x))^a .
\]
By Lemma 3.1 and the lower bound for \(u_1\), we have
\[
\int_{x_+(0)}^{x_-(0)}u_1(y)\,dy
\ge Cx^{\beta}(x_-(0)-x_+(0))
= Cx^{\beta}\int_0^t A(s)\,ds .
\]
On the other hand, since \(u^{a-1}u_x\) is bounded, we have
\[
u^{2a-1}u_x^2
=
u\,(u^{a-1}u_x)^2
\le Cu .
\]
Using the upper estimate \(u(t,x)\le Cx^{\beta}\) and Lemma \ref{es-cq} again, we obtain
\[
\begin{aligned}
R(t,x)
&\le C\int_0^t\int_{x_+(\tau)}^{x_-(\tau)}
u(\tau,\xi)\,d\xi d\tau  \\
&\le Cx^{\beta}\int_0^t
(x_-(\tau)-x_+(\tau))\,d\tau  \\
&= Cx^{\beta}\int_0^t\int_\tau^t A(s)\,ds d\tau  \\
&= Cx^{\beta}\int_0^t sA(s)\,ds  \\
&\le CTx^{\beta}\int_0^t A(s)\,ds .
\end{aligned}
\]
Therefore, taking \(T>0\) smaller if necessary, the term \(R(t,x)\)
can be absorbed into the positive term involving \(u_1\). Hence, from \eqref{we-df2}, we obtain
\[
u(t,x)^{a+1}
\ge
C x^{\alpha(a+1)}
+
C x^{\beta}\int_0^t u(s,x_+(s;t,x))^a\,ds .
\]
Consequently, the estimates corresponding to \eqref{1st} and \eqref{pr}
hold, and the same iteration argument as in Proposition~\ref{34} gives
\[
u(t,x)\ge Ctx^{\beta}.
\]

\section{Comparison with an explicit representation formula}

We end this paper by comparing our result with a model equation for
which an explicit representation formula is available. This comparison
is not used in the proof of the main theorems. Its purpose is to show
that, in special cases where a representation formula is known, the same
loss of regularity can be seen directly from the formula.

In Galstian and Kinoshita~\cite{GalstianKinoshita2016}, the authors
consider the following one-dimensional degenerate linear equation:
\[
  \partial_t^2 v - x^4\partial_x^2 v = 0.
\]
They provide an explicit representation formula of the form
\[
v(t,x)
=
\frac{1}{2}\Bigl[
(1+tx)\,u_0\!\Bigl(\frac{x}{1+tx}\Bigr)
+
(1-tx)\,u_0\!\Bigl(\frac{x}{1-tx}\Bigr)
\Bigr]
+
\frac{x}{2}
\int_{1/x-t}^{1/x+t}
\eta\,u_1\!\Bigl(\frac{1}{\eta}\Bigr)\,d\eta .
\]
Assume that, near \(x=0\),
\[
  u_0(x)\sim x^\alpha,
  \qquad
  u_1(x)\sim x^\beta,
  \qquad
  \alpha>\beta>0.
\]
Then the first term in the above representation satisfies
\[
\frac{1}{2}\Bigl[
(1+tx)\Bigl(\frac{x}{1+tx}\Bigr)^\alpha
+
(1-tx)\Bigl(\frac{x}{1-tx}\Bigr)^\alpha
\Bigr]
=
x^\alpha\bigl(1+O(x)\bigr).
\]
Thus the contribution of \(u_0\) remains of order \(x^\alpha\).

On the other hand, the second term is governed by \(u_1\). By the
change of variable \(1/\eta=\xi\), we obtain
\[
\frac{x}{2}
\int_{1/x-t}^{1/x+t}
\eta u_1\!\left(\frac{1}{\eta}\right)d\eta
=
\frac{x}{2}
\int_{x/(1+tx)}^{x/(1-tx)}
\frac{u_1(\xi)}{\xi^3}\,d\xi .
\]
Using \(u_1(\xi)\sim \xi^\beta\), put
\[
  \ell=\frac{x}{1+tx},
  \qquad
  r=\frac{x}{1-tx}.
\]
Then
\[
\frac{x}{2}
\int_{\ell}^{r}\xi^{\beta-3}\,d\xi
\]
is the leading contribution of the \(u_1\)-term. If \(\beta\ne2\), then
\[
\int_{\ell}^{r}\xi^{\beta-3}\,d\xi
=
\frac{r^{\beta-2}-\ell^{\beta-2}}{\beta-2}
=
2t\,x^{\beta-1}+o(x^{\beta-1}),
\]
and therefore
\[
\frac{x}{2}
\int_{\ell}^{r}\xi^{\beta-3}\,d\xi
=
t x^\beta+o(x^\beta).
\]
If \(\beta=2\), then
\[
\int_{\ell}^{r}\xi^{-1}\,d\xi
=
\log\frac{r}{\ell}
=
\log\frac{1+tx}{1-tx}
=
2tx+o(x),
\]
and hence the same conclusion follows:
\[
\frac{x}{2}
\int_{\ell}^{r}\xi^{-1}\,d\xi
=
t x^2+o(x^2).
\]
Consequently, for sufficiently small positive time,
\[
  C_1 t x^\beta\leq v(t,x)\leq C_2 x^\beta
\]
near \(x=0\). Thus, even in this explicitly solvable model, the leading
order of the solution changes from the order of \(u_0\) to the order of
\(u_1\) when \(\beta<\alpha\).

This example illustrates the same mechanism as the one proved in the
present paper. The advantage of our approach is that it does not rely on
an explicit representation formula. Instead, using generalized
d'Alembert-type formulae, weighted estimates, and iteration arguments,
we can prove the loss of regularity for a wider class of degenerate
linear equations and also for quasilinear equations.

\section*{Acknowledgements}
Y. Hu was partially supported by the National Natural Science Foundation of China (Nos. 12171130 and 12071106). Y. Sugiyama was partially supported by Grants-in-Aid for Scientific Research (C) (No. 23K03169).

\bibliographystyle{amsplain}
\bibliography{bibliography}

\end{document}